\newtheorem*{theorem*}{Theorem}
\newtheorem{maintheorem}{Theorem}[section]
\newtheorem{theorem}{Theorem}[section]
\newtheorem{proposition}[theorem]{Proposition}
\newtheorem{question}[theorem]{Question}
\theoremstyle{definition}
\newtheorem{definition}[theorem]{Definition}
\newtheorem*{definition*}{Definition}
\newtheorem{example}[theorem]{Example}
\newtheorem{remark}[theorem]{Remark}
\newtheoremstyle{myitemstyle}						
	{}			
	{}			
	{}			
	{}			
	{}			
	{.}			
	{ }			
	{}			
\theoremstyle{myitemstyle}
\newtheorem{myitemthm}{}
\newcommand{\R}{\mathbb{R}}
\newcommand{\Rbar}{\overline{\mathbb{R}}}
\newcommand{\Z}{\mathbb{Z}}
\newcommand{\calB}{\mathcal{B}}
\newcommand{\sfA}{\mathsf{A}}
\newcommand{\sfM}{\mathsf{M}}
\newcommand{\sfP}{\mathsf{P}}
\DeclareMathOperator{\val}{val}
\DeclareMathOperator{\St}{\mathsf{St}}
\DeclareMathOperator{\Hess}{Hess}
\newcommand{\trop}{{\mathrm{trop}}}
\DeclareMathAlphabet{\mymathbb}{U}{bbold}{m}{n}
\newcommand{\bigmid}{\mathrel{\big|}}
\def\multiset#1#2{\ensuremath{\left(\kern-.3em\left(\genfrac{}{}{0pt}{}{#1}{#2}\right)\kern-.3em\right)}}
\newcommand{\superimpose}[2]{{\ooalign{$#1\@firstoftwo#2$\cr\hfil$#1\@secondoftwo#2$\hfil\cr}}}
\title[Log-concavity for independent sets of valuated matroids]{Log-concavity for independent sets of valuated matroids} 
\date{}
\author[J.\ Giansiracusa]{Jeffrey Giansiracusa}
\address{Department of Mathematical Sciences, Durham University, United Kingdom}
\email{jeffrey.giansiracusa@durham.ac.uk}
\author[F.\ Rinc{\'o}n]{Felipe Rinc{\'o}n}
\address{School of Mathematical Sciences, Queen Mary University of London, United
Kingdom.}
\email{f.rincon@qmul.ac.uk}
\author[V.\ Schleis]{Victoria Schleis}
\address{Department of Mathematical Sciences, Durham University, United Kingdom}
\email{victoria.m.schleis@durham.ac.uk}
\author[M.\ Ulirsch]{Martin Ulirsch}
\address{Institut f\"ur Mathematik, Goethe--Universit\"at Frankfurt,
Germany}
\email{ulirsch@math.uni-frankfurt.de}
\begin{document}

\begin{abstract} 
Recently, several proofs of the Mason--Welsh conjecture for matroids have been found, which asserts the log-concavity of the sequence that counts independent sets of a given size. In this article we use the theory of Lorentzian polynomials, developed by Br\"and\'en and Huh, to prove a generalization of the Mason-Welsh conjecture to the context of valuated matroids. In fact, we provide a log-concavity result in the more general setting of valuated discrete polymatroids, or equivalently, M-convex functions. Our approach is via the construction of a generic extension of a valuated matroid or M-convex function, so that the bases of the extension are related to the independent sets of the original matroid. We also provide a similar log-concavity result for valuated bimatroids, which, we believe, might be of independent interest.
\end{abstract}

\maketitle



\section*{Introduction}

In \cite{DressWenzel}, Dress and Wenzel introduced \emph{valuated matroids} building a bridge between combinatorics and geometry over non-Archimedean fields. With the rise of tropical geometry in the last decade(s) the interest in valuated matroids has seen a significant increase, since they are  essentially the same thing as \emph{tropical linear spaces} (see e.g.\ \cite[Chapter 4]{MaclaganSturmfels} and \cite[Section 10]{Joswig_book} for details on this rich story). 

Let $E$ be a finite set and $\Gamma$ a totally ordered additive abelian group.  
Given an integer $r\geq 0$, we write $\binom{E}{r}$ and $\binom{E}{\leq r}$ for the set of subsets of $E$ with exactly $r$ or at most $r$ elements, respectively. We also write $\overline{\Gamma}=\Gamma\sqcup\{\infty\}$.

A \textbf{valuated matroid $\sfM$} of rank $r\geq 0$ on $E$ is given by a map $\nu_{\sfM}\colon {E\choose r}\rightarrow\overline{\Gamma}$ that is not everywhere equal to $\infty$ and fulfils the following valuated enrichment of the symmetric basis exchange property: 
Given two subsets $S,T\in{E\choose r}$ as well as $s\in S-T$ there is a $t\in T-S$ such that 
\begin{equation}\label{eq_Pluecker}
\nu_{\sfM}(S)+\nu_{\sfM}(T) \ \ \geq \ \ \nu_{\sfM}\big(S-\{s\}\cup\{t\}\big)+\nu_{\sfM}\big(T-\{t\}\cup\{s\}\big) \ .
\end{equation}
For a valuated matroid $\sfM$ of rank $r$ on $E$, we note that the set
\begin{equation*}
\calB(\sfM)=\big\{B\in{E\choose r}\mid \nu_{\sfM}(B)\neq \infty\big\}
\end{equation*}
is the set of bases of a matroid called the \textbf{underlying matroid}. In fact, when $\Gamma=\{0\}$, the datum of a valuated matroid with values in $\overline{\Gamma}$ is nothing but a matroid. 

\subsection*{A valuated version of the Mason--Welsh conjecture} In \cite{Murota_valuationindependentsets}, Murota uses a natural extension of $\nu_{\sfM}$ to a map $\widetilde{\nu}_\sfM:{E\choose\leq r}\rightarrow \overline{\Gamma}$ in order to establish a cryptomorphic characterization of valuated matroids that expands on the characterization of (non-valuated) matroids in terms of independent sets. 
The extension $\widetilde{\nu}_\sfM$ is given by 
\begin{equation}\label{eq_extensionleqr}
\widetilde{\nu}_\sfM(S):=\min\big\{\nu_{\sfM}(B)\mid B\in{E\choose r} \textrm{ and } S\subseteq B\big\} 
\end{equation} 
for $S\subseteq E$ with $\vert S\vert \leq r$. 

The following theorem  
generalizes the Mason--Welsh log-concavity conjecture for independent sets of matroids to the realm of valuated matroids. 
From now on, we assume that $\Gamma$ is an additive subgroup of $\R$ so that exponentiation makes sense.

\begin{maintheorem}\label{mainthm_valMason}
Let $\sfM=(E,\nu_{\sfM})$ be a valuated matroid of rank $r$ on a finite ground set $E$ and denote by $\widetilde{\nu}_\sfM$ the extension of $\nu_{\sfM}$ to all of ${E\choose \leq r}$ as above. Given a constant $0<q\leq 1$, we define
\begin{equation*}
I_k(\sfM):=\sum_{S\in{E\choose k}} q^{\widetilde{\nu}_\sfM(S)} 
\end{equation*}
for $0\leq k\leq r$, where by convention $q^\infty = 0$. 
Then we have
\begin{equation*}
I_k(\sfM)^2 \ \ \geq \ \ \frac{k+1}{k}\cdot I_{k+1}(\sfM)\cdot I_{k-1}(\sfM)  
\end{equation*}
for all $1\leq k<r$. In particular, the sequence $I_k(\sfM)$ is log-concave. 
\end{maintheorem}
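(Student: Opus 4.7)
The plan is to realize each $I_k(\sfM)$ as a specialized coefficient of the basis polynomial of a larger valuated discrete polymatroid $\widetilde{\sfM}$, obtained as a \emph{generic extension} of $\sfM$, and then to extract the desired log-concavity from the Lorentzian theory of Br\"and\'en--Huh applied to $\widetilde{\sfM}$.

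\textbf{The generic extension.} Adjoin a new ``free'' element $*\notin E$, and define $\widetilde{\sfM}$ on $E\sqcup\{*\}$ as the valuated discrete polymatroid (equivalently, M-concave function) of rank $r$ whose value at $\alpha\in\Z^{E\sqcup\{*\}}_{\geq 0}$ is
\[
\nu_{\widetilde{\sfM}}(\alpha) \ := \ \widetilde{\nu}_\sfM\bigl(\{e\in E:\alpha_e=1\}\bigr)
\]
whenever $\alpha_e\in\{0,1\}$ for every $e\in E$ and $\sum_{e\in E}\alpha_e + \alpha_* = r$, and $+\infty$ otherwise. Intuitively $*$ behaves as a free element of multiplicity up to $r$, and the ``bases'' of $\widetilde{\sfM}$ correspond to pairs $(S,k)$ with $S\subseteq E$ an independent set of $\sfM$ of size $r-k$.

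\textbf{M-concavity of the extension.} The main technical content is to verify that $\nu_{\widetilde{\sfM}}$ satisfies the valuated exchange axiom on $\Z^{E\sqcup\{*\}}_{\geq 0}$. This splits into three types of moves: (i) exchanges within $E$, which reduce directly to Murota's symmetric exchange axiom for $\widetilde{\nu}_\sfM$ on $\binom{E}{\leq r}$ from~\cite{Murota_valuationindependentsets}; (ii) exchanges swapping a multiplicity of $*$ with an element of $E$, which reduce to Murota's augmentation inequality
\[
\widetilde{\nu}_\sfM(S) + \widetilde{\nu}_\sfM(T) \ \geq \ \widetilde{\nu}_\sfM(S\cup\{j\}) + \widetilde{\nu}_\sfM(T\setminus\{j\}) \quad \text{for some } j\in T\setminus S,
\]
valid whenever $|S|<|T|$; and (iii) exchanges involving $*$ on both sides, which are trivial. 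This is the main obstacle, since one has to correctly encode the unbounded-multiplicity coordinate of $*$ within the polymatroid exchange axiom, but each subcase ultimately reduces to a known inequality for $\widetilde{\nu}_\sfM$.

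\textbf{Extracting the inequality.} With $\widetilde{\sfM}$ known to be M-concave, the Br\"and\'en--Huh theorem (in its valuated M-concave form) applied with the fixed $0<q\leq 1$ yields that the basis polynomial
\[
F_{\widetilde{\sfM}}(x_E,y) \ = \ \sum_{\substack{S\subseteq E\\ |S|\leq r}} q^{\widetilde{\nu}_\sfM(S)}\, x^S\,\frac{y^{r-|S|}}{(r-|S|)!}
\]
is Lorentzian in the variables $(x_E,y)$. Lorentzianness is preserved under the diagonal specialization $x_e\mapsto x$ for all $e\in E$, producing the bivariate polynomial
\[
G(x,y) \ = \ \sum_{k=0}^{r} \frac{I_k(\sfM)}{(r-k)!}\, x^k y^{r-k}.
\]
A nonnegative homogeneous bivariate polynomial of degree $r$ with no internal zeros is Lorentzian precisely when the normalized sequence $\bigl(a_k/\binom{r}{k}\bigr)_k$ is log-concave. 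Unpacking this condition for $a_k = I_k(\sfM)/(r-k)!$ and cancelling binomial coefficients and factorials yields exactly the claim $I_k(\sfM)^2 \geq \tfrac{k+1}{k}\, I_{k+1}(\sfM)\, I_{k-1}(\sfM)$; the factor $\tfrac{k+1}{k}$ appears as the product of the combinatorial ratio $\binom{r}{k}^2/(\binom{r}{k-1}\binom{r}{k+1})$ with the factorial ratio $(r-k)!^2/((r-k+1)!(r-k-1)!)$.
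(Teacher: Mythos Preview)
Your proposal is correct and follows essentially the same route as the paper: form the generic one-element polymatroid extension $\widetilde{\sfM}$ on $E\sqcup\{*\}$, verify its $M$-convexity, apply Br\"and\'en--Huh to obtain a Lorentzian polynomial, specialize all $E$-variables to a single variable, and read off the inequality $I_k^2\geq\tfrac{k+1}{k}I_{k+1}I_{k-1}$ from the bivariate Lorentzian criterion. The paper carries out exactly this argument (phrasing it in the more general polymatroid setting, with $Q=\{0\}$ playing the role of your $*$), and the final coefficient computation matches yours.

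The one place where the paper is more careful than your sketch is the verification of $M$-convexity of the extension. Your case~(i), ``exchanges within $E$'', does not reduce quite as directly as you suggest: when the two $E$-parts $S,T$ have different sizes the symmetric exchange for $\widetilde{\nu}_\sfM$ at a fixed rank is not immediately available, and the paper handles this via an auxiliary enlargement of the smaller set (when $|S|\le|T|$) and an induction on $|S|-|T|$ (when $|S|>|T|$). Your acknowledgement that ``this is the main obstacle'' is apt; the reductions to Murota's axioms are correct but require these extra maneuvers rather than a single citation.
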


Note that when $q=1$,  
the number $I_k(\sfM)$ 
is the number of independent sets of the underlying matroid of $\sfM$. In this case, Theorem \ref{mainthm_valMason} reduces to a 
stronger version of Mason--Welsh's original log-concavity conjecture for (non-valuated) matroids.

A first proof of the Mason--Welsh conjecture in the non-valuated situation has appeared in \cite{AHK} using the setup provided in \cite{Lenz_logconcavefvector}. In \cite{BrandenHuh}, the authors use Lorentzian polynomials to prove a strengthening of this inequality, commonly referred to as \textbf{ultra log-concavity} (also see \cite{AnariLiuGharanVinzantIII} for a similar approach via log-concave polynomials). Let $N=\vert E\vert$. Then this means that the inequality 
\begin{equation*}
\Bigg(\frac{I_k(\sfM)}{{N \choose k}}\Bigg)^2 \ \ \geq \ \ \frac{I_{k+1}(\sfM)}{{N \choose k+1}}\cdot \frac{I_{k-1}(\sfM)}{{N\choose k-1}}
\end{equation*}
holds for all $1\leq k<N$. We refer to \cite{HuhSchroeterWang} and \cite{ChanPak} for other approaches to this story.  

In Section \ref{section_ultralogconcavity} below we conjecture, based on computational evidence, that a strengthening of Theorem \ref{mainthm_valMason} to ultra log-concavity in the setting of valuated matroids might hold.

Our proof of Theorem \ref{mainthm_valMason} makes crucial use of the theory of Lorentzian polynomials. 
This approach is similar to the approach to the Mason--Welsh conjecture presented in \cite{Lenz_logconcavefvector} in the representable case, which was generalized to the non-representable case in \cite{AHK}. It also expands on the recent perspective on the Mason--Welsh conjecture established in \cite{RoehrleUlirsch}. 

The central novel ingredient in our proof is the construction of a suitable `generic' extension $\widetilde{\sfM}$ of a valuated matroid $\sfM$ to a larger ground set such that the basis valuations of $\widetilde{\sfM}$ are equal to the independent set valuations of $\sfM$ (see Proposition \ref{prop_genericextension} below). We refer the reader to \cite{FinkOlarte, BrandenburgLohoSmith} for more insights towards a general theory of extensions and quotients of valuated matroids, and to \cite[Section 7.2 and 7.3]{Oxley} for the classical non-valuated story of extensions of matroids.

\subsection*{Log-concavity of valuated polymatroids} Our approach to the proof of Theorem \ref{mainthm_valMason} via the construction of generic extensions of valuated matroids can be generalized to the setting of valuated polymatroids.

Given a finite set $E$, we write $e_s$ for $s\in E$ the standard basis vector of $\Z^E$, whose $s$-th entry is one and is zero elsewhere, and 
\begin{equation*}
\Delta_E^k=\big\{\alpha\in\Z_{\geq 0}^E\bigmid \vert \alpha\vert:=\sum_{e\in E}\alpha_e=k\big\}
\end{equation*}
for the \textbf{$k$-th discrete simplex} on $E$. We may interpret $\Delta_E^k$ as the set  $\multiset{E}{k}$ of multisets of size $k$ in $E$ by associating to $\alpha\in\Delta_E^k$ the multiset consisting of $\alpha_e$ many copies of the element $e\in E$. Under this correspondence, the intersection of $\Delta_E^k$ with the hypercube $\{0,1\}^E$ corresponds precisely to ${E\choose r}$. 

Discrete polymatroids are a multiset generalization of matroids that abstracts the behaviour of subspaces of a fixed vector space.  
A \textbf{valuated (discrete) polymatroid} $\sfP$ of rank $r$ on $E$ can be defined in terms of its associated $M$-convex function (as introduced in \cite{Murota_Convexity&Steinitz,Murota_valuatedmatroidintersectionI,Murota_valuatedmatroidintersectionII}). 
An \textbf{$M$-convex function} of rank $r$ is a function $\nu_{\sfP}\colon\Delta_E^r\rightarrow \overline{\Gamma}$, not equal to the constant function $\infty$, that fulfills a generalization of the valuated Pl\"ucker relation \eqref{eq_Pluecker} above: For all $\alpha,\beta\in\Delta_E^r$ and $s\in E$ such that $\alpha_s>\beta_s$ there is $t\in E$ with $\beta_t>\alpha_t$ such that 
\begin{equation}\label{eq_MconvexPluecker}
    \nu_{\sfP}(\alpha)+\nu_{\sfP}(\beta) \ \ \geq \ \ \nu_{\sfP}(\alpha-e_s+e_t) + \nu_{\sfP}(\beta-e_t+e_s) \ . 
\end{equation}

We write 
\begin{equation*}
    \Delta_E^{\leq r}:=\big\{\alpha\in\Z_{\geq 0}^E\bigmid\vert\alpha\vert\leq r\big\}.
\end{equation*}
Analogous to Murota's extension of a basis valuation function to independent sets, we define a function $\widetilde{\nu}_\sfP\colon \Delta_{E}^{\leq r}\rightarrow\overline{\Gamma}$ that extends $\nu_\sfP$ by setting
\begin{equation*}
\widetilde{\nu}_\sfP(\alpha):=\min\big\{\nu_{\sfP}(\beta)\mid \beta\in\Delta_{E}^{r} \textrm{ and } \alpha_e\leq \beta_e \textrm{ for all }e\in E\big\} 
\end{equation*} 
for $\alpha\in \Delta_{E}^{\leq r}$. Given $\alpha\in\Z^E_{\geq 0}$, we write $\alpha!=\prod_{e\in E}\alpha_e!$.  

The following theorem generalizes Theorem \ref{mainthm_valMason} to the context of polymatroids.

\begin{maintheorem}\label{mainthm_logconcavitypolymatroids}
Let $\sfP$ be a valuated polymatroid of rank $r$ on a finite set $E$. Given $0<q\leq 1$, we define
\begin{equation*}
I_k(\sfP):=\sum_{\alpha\in\Delta_E^k}\frac{q^{\widetilde{\nu}_\sfP(\alpha)}}{\alpha!}
\end{equation*}
for every $0\leq k\leq r$, where by convention $q^\infty = 0$. Then we have 
\begin{equation*}
    I_k(\sfP)^2 \ \ \geq \ \ \frac{k+1}{k}\cdot I_{k+1}(\sfP)\cdot I_{k-1}(\sfP)
\end{equation*}
for all $1\leq k<r$. In particular, the sequence $I_k(\sfP)$ is log-concave.
\end{maintheorem}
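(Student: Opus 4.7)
The plan is to mimic the strategy behind Theorem \ref{mainthm_valMason} by constructing a generic extension $\widetilde{\sfP}$ of $\sfP$ whose basis valuations encode the independent set valuations of $\sfP$, and then invoking Lorentzian polynomial technology. First I would adjoin a single new element $\ast\notin E$, set $\widetilde{E}=E\sqcup\{\ast\}$, and define $\nu_{\widetilde{\sfP}}\colon\Delta_{\widetilde{E}}^r\to\overline{\Gamma}$ by
\[
\nu_{\widetilde{\sfP}}(\alpha+n\,e_\ast):=\widetilde{\nu}_\sfP(\alpha),\qquad \alpha\in\Delta_E^{\leq r},\ n=r-\vert\alpha\vert.
\]
Note that in the polymatroid case one free coordinate should suffice, in contrast with the $r$ new free elements needed in the matroid case of Theorem \ref{mainthm_valMason}, because polymatroids accommodate arbitrary multiplicities at a single ground element.

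The hardest step will be to verify that $\nu_{\widetilde\sfP}$ is M-convex, i.e.\ that it satisfies the exchange inequality \eqref{eq_MconvexPluecker}. For $\gamma,\delta\in\Delta_{\widetilde{E}}^r$ and $s\in\widetilde{E}$ with $\gamma_s>\delta_s$, one must produce a witnessing $t\in\widetilde{E}$ with $\delta_t>\gamma_t$. When a suitable $t$ can be taken in $E$, the desired inequality should follow from the valuated independence axioms for $\widetilde{\nu}_\sfP$ on $\Delta_E^{\leq r}$ from \cite{Murota_valuationindependentsets}, together with the fact that the construction above is essentially the standard conversion from $M^\natural$-convex to $M$-convex functions. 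The remaining cases, where $s=\ast$ or $t=\ast$, will be more delicate: one will need to pick explicit bases $\beta_\gamma,\beta_\delta\in\Delta_E^r$ of $\sfP$ witnessing the minima that define $\widetilde{\nu}_\sfP(\gamma\vert_E)$ and $\widetilde{\nu}_\sfP(\delta\vert_E)$, apply the exchange axiom \eqref{eq_MconvexPluecker} for these bases in $\sfP$, and translate the resulting exchange element back through the definition of $\widetilde\nu_\sfP$. This step should upgrade Proposition \ref{prop_genericextension} to the polymatroid setting.

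Once M-convexity of $\nu_{\widetilde\sfP}$ is in hand, by the theorem of Br\"and\'en--Huh \cite{BrandenHuh} the $q$-twisted basis generating polynomial
\[
f_{\widetilde\sfP}(x,y)=\sum_{\gamma\in\Delta_{\widetilde{E}}^r}\frac{q^{\nu_{\widetilde\sfP}(\gamma)}}{\gamma!}\,\mbf{x}^{\gamma\vert_E}\,y^{\gamma_\ast}
\]
is Lorentzian in the variables $(x_e)_{e\in E}$ and $y$ whenever $0<q\leq 1$. Since Lorentzian polynomials are preserved under specialisation along non-negative linear maps, I would then set $x_e=z$ for all $e\in E$ to obtain the homogeneous bivariate Lorentzian polynomial
\[
g(z,y)=f_{\widetilde\sfP}(z\cdot\1,y)=\sum_{k=0}^{r}\frac{I_k(\sfP)}{(r-k)!}\,z^k y^{r-k},
\]
after grouping summands by $k=\vert\gamma\vert_E\vert$. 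Finally, Newton's inequality for bivariate Lorentzian polynomials of degree $r$ applied to the coefficients $a_k:=I_k(\sfP)/(r-k)!$ gives $a_k^2\geq a_{k-1}a_{k+1}\cdot\tfrac{(k+1)(r-k+1)}{k(r-k)}$, which a short factorial manipulation converts into the claimed bound $I_k(\sfP)^2\geq\tfrac{k+1}{k}\,I_{k-1}(\sfP)\,I_{k+1}(\sfP)$.
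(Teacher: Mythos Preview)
Your proposal is correct and follows essentially the same route as the paper: adjoin a single free element to form a generic extension $\widetilde{\sfP}$ on $\widetilde{E}=E\sqcup\{\ast\}$, invoke Br\"and\'en--Huh to see that $f_{\widetilde{\sfP}}$ is Lorentzian, specialise all $E$-variables to a single variable, and read off ultra log-concavity of $I_k(\sfP)/(r-k)!$, which unwinds to the claimed inequality exactly as you indicate.

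The only place where you and the paper diverge is in how the $M$-convexity of $\nu_{\widetilde{\sfP}}$ is established (this is Proposition~\ref{prop_genericextensionMconvexfunction}). The paper does not argue directly on $\Delta_{\widetilde{E}}^r$; instead it introduces the \emph{multisymmetric lift} (Proposition~\ref{prop_multisymmetriclift}), which turns an $M$-convex function into a valuated matroid on a larger ground set, applies the already-proved generic extension for valuated matroids (Proposition~\ref{prop_genericextension}), and then observes that the result is the multisymmetric lift of $\nu_{\widetilde{\sfP}}$. Your suggestion to instead recognise the construction as the standard $M^\natural$-to-$M$-convex conversion is also valid --- the paper itself remarks after the proof that this gives an alternative argument via \cite{MurotaShiouraII,MurotaShiouraIII,Murota_strongermultipleexchangeproperty}. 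One small correction: the independence axioms in \cite{Murota_valuationindependentsets} that you cite are stated for valuated \emph{matroids}, not polymatroids, so for the $M^\natural$-convex route you should point to the Murota--Shioura references rather than that paper.
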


We point out that even in the non-valuated case of discrete polymatroids, i.e., when $\nu_{\sfP}$ only takes values $0$ or $\infty$, Theorem \ref{mainthm_logconcavitypolymatroids} appears to be a new result. 

Theorem \ref{mainthm_logconcavitypolymatroids} is proved via the construction of a generic extension of $M$-convex functions in Proposition \ref{prop_genericextensionMconvexfunction} below. The construction uses extensions of valuated matroids (Proposition \ref{prop_genericextension}) and the characterization of $M$-convex functions in terms of their multi-symmetric lifts in Proposition \ref{prop_multisymmetriclift} (expanding on the non-valuated case treated in \cite{EurLarson}).

\subsection*{Log-concavity of valuated bimatroids} In \cite{Kung_bimatroids}, Kung introduces the notion of a \textbf{bimatroid} as a combinatorial abstraction of the set of regular minors of a fixed matrix (also see \cite{Schrijver_linkingsystems} for the equivalent notion of \textbf{linking systems} and \cite{RoehrleUlirsch} for a more recent account). 

In \cite{Murota_valuatedbimatroids}, Murota proposes a valuated generalization of bimatroids. Given two finite disjoint sets $E$ and $F$, we write ${E\choose \ast}\times{F\choose \ast}$ for the collection of pairs $(I,J)$ consisting of $I\subseteq E$ and $J\subseteq F$ of the same cardinality.  A \textbf{valuated bimatroid} $\sfA$ is given by a map $\mu_\sfA\colon {E\choose \ast}\times{F\choose \ast}\rightarrow \overline{\Gamma}$ such that the map $\nu_{\widehat{\sfA}}:\binom{E \sqcup F}{|E|} \rightarrow \overline{\Gamma}$ given by
\begin{equation*}
    \nu_{\widehat{\sfA}}(S) = \mu_\sfA(E-S,S\cap F)
\end{equation*}
is a valuated matroid $\widehat{\sfA}$ of rank $|E|$ on the disjoint union $E\sqcup F$ satisfying $\nu_{\widehat{\sfA}}(E)=0$. Given a matrix $A\in K^{E\times F}$ over a non-Archimedean valued field $K$, the function $\mu_A$ that is given by taking (negative) valuations of all minors of $A$ forms a \textbf{realizable valuated bimatroid} (see Example \ref{example_tropicalization} below). 

The following theorem is a valuated generalization of \cite[Theorem A]{RoehrleUlirsch}.

\begin{maintheorem}\label{mainthm_logconcavitybimatroids}
    Let $E$ and $F$ be finite sets and $\sfA$ be a valuated bimatroid on the rows $E$ and columns $F$ 
    with values in $\overline{\Gamma}\subseteq\Rbar$. Set 
    \begin{equation}\label{eq_R_k}
        R_k(\sfA):=\sum_{(I,J)\in{E\choose k}\times{F\choose k}}q^{\mu_{\sfA}(I,J)}  
    \end{equation}
    for $k\geq 0$, where by convention $q^\infty = 0$. Then the sequence $R_k$ is ultra log-concave, i.e. we have 
    \begin{equation*}
        \frac{R_k(\sfA)^2}{{N\choose k}^2}\ \ \geq \ \ \frac{R_{k+1}(\sfA)}{{N\choose k+1}}\cdot \frac{R_{k-1}(\sfA)}{{N\choose k-1}}\ ,
    \end{equation*}
    for every $N\geq\min\big\{\vert E\vert,\vert F\vert\big\}$.
\end{maintheorem}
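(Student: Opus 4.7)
The plan is to reduce Main Theorem~\ref{mainthm_logconcavitybimatroids} to the classical ultra log-concavity of coefficients of bivariate Lorentzian polynomials, applied to the $q$-weighted basis generating polynomial of the associated valuated matroid $\widehat{\sfA}$. Since $R_k(\sfA)=R_k(\sfA^T)$ is immediate from \eqref{eq_R_k} together with the evident symmetry $\mu_{\sfA^T}(J,I)=\mu_\sfA(I,J)$, I may assume without loss of generality that $|E|\leq |F|$, so that $\min\{|E|,|F|\}=|E|$.

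I would then form the $q$-weighted basis generating polynomial
\begin{equation*}
G_{\widehat{\sfA}}(x) \ = \ \sum_{B\in\calB(\widehat{\sfA})} q^{\nu_{\widehat{\sfA}}(B)} \prod_{e\in B} x_e
\end{equation*}
of the rank-$|E|$ valuated matroid $\widehat{\sfA}$ on $E\sqcup F$ supplied by the definition of valuated bimatroid. By the Br\"and\'en--Huh correspondence between $M$-concave functions and Lorentzian polynomials, together with the observation that writing $q=e^{-t}$ with $t\geq 0$ expresses $q^{\nu_{\widehat{\sfA}}(B)}$ as $\exp((-t\nu_{\widehat{\sfA}})(B))$, where $-t\nu_{\widehat{\sfA}}$ is $M$-concave (because negating a min-tropical Pl\"ucker function yields a max-tropical one, preserved under scaling by $t\geq 0$), the polynomial $G_{\widehat{\sfA}}$ is Lorentzian.

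Next, the bijection $(I,J)\mapsto (E-I)\cup J$ identifies pairs $(I,J)\in\binom{E}{k}\times\binom{F}{k}$ with the $|E|$-element subsets $B\subseteq E\sqcup F$ satisfying $|B\cap F|=k$, and one has $\mu_\sfA(I,J)=\nu_{\widehat{\sfA}}((E-I)\cup J)$ under this bijection. Specializing $x_e=y$ for every $e\in E$ and $x_f=z$ for every $f\in F$ preserves the Lorentzian class and produces the bivariate homogeneous polynomial
\begin{equation*}
G_{\widehat{\sfA}}(y,z) \ = \ \sum_{k=0}^{|E|} R_k(\sfA)\, y^{|E|-k} z^k \ .
\end{equation*}
The standard fact that a bivariate homogeneous Lorentzian polynomial of degree $n$ has ultra log-concave coefficients with respect to $\binom{n}{k}$ then yields the desired inequality for $N=|E|=\min\{|E|,|F|\}$. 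Since the ratio $\binom{N}{k}^2/\big(\binom{N}{k-1}\binom{N}{k+1}\big)$ is monotonically decreasing in $N$, the case $N=\min\{|E|,|F|\}$ is the strongest instance and implies the inequality for all larger $N$.

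The main technical ingredient to verify carefully is the Lorentzian property of $G_{\widehat{\sfA}}$, which requires bookkeeping of the min/max conventions connecting valuated matroids and $M$-concave functions; once this is secured, the remainder of the argument reduces to standard closure properties of the Lorentzian class under diagonal specializations of variables and the coefficient inequality for bivariate Lorentzian polynomials.
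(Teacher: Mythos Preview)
Your proposal is correct and follows essentially the same route as the paper: pass to the associated valuated matroid $\widehat{\sfA}$, use the Br\"and\'en--Huh theorem to see that its $q$-weighted basis generating polynomial is Lorentzian, specialize the variables according to the partition $E\sqcup F$ to obtain a bivariate Lorentzian polynomial with coefficients $R_k(\sfA)$, and read off ultra log-concavity. Your reduction to $|E|\leq |F|$ via the transpose at the outset is equivalent to the paper's use of $\sfA^T$ at the end, and your monotonicity remark for varying $N$ matches the paper's concluding observation.
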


When $q=1$, the number $R_k(\sfM)$ is the number of regular  minors of $\sfA$. So Theorem \ref{mainthm_logconcavitybimatroids} recovers the ultra log-concavity of this sequence, which appeared in \cite[Theorem A]{RoehrleUlirsch}. If $\sfA$ is realizable by a matrix $A\in K^{E\times F}$, then Theorem \ref{mainthm_logconcavitybimatroids} tells us that the sequence $R_k(A)$ defined by
\begin{equation*}
    R_k(A):=\sum_{(I,J)\in{E\choose k}\times{F\choose k}}q^{\val(\det[A]_{I,J})} 
\end{equation*}
as in \eqref{eq_R_k} is ultra log-concave.

Let $\sfM$ be a valuated matroid on a finite set $F$. We note that the inequality $I_k(\sfM)^2\geq I_{k+1}(\sfM)\cdot I_{k-1}(\sfM)$ implied by Theorem \ref{mainthm_valMason} also follows from Theorem \ref{mainthm_logconcavitybimatroids}. Given a finite set $E$ disjoint from $F$, in Proposition \ref{prop_genericextension} below we construct a valuated matroid $\widetilde{\sfM}$ of rank $\vert E\vert$ on $E\sqcup F$ satisfying $\nu_{\widetilde{\sfM}}(E)=0$ and $\nu_{\widetilde{\sfM}}(\widetilde{S})=\widetilde{\nu}_\sfM(\widetilde{S}\cap F)$ for $\widetilde{S}\in{E\sqcup F\choose \vert E\vert}$ with $\widetilde{S}\cap F\neq \emptyset$. By Proposition \ref{prop_extendedmatroid} below, we may interpret $\widetilde{\sfM}$ as the valuated matroid $\widehat{\sfA}$ of a valuated bimatroid $\sfA$ of type $E\times F$ and, by construction of $\widetilde{\sfM}$, we have $R_k(\sfA)=I_k(\sfM) \cdot {\vert E\vert \choose k}$. This implies the weaker inequality 
\begin{equation*}
I_k(\sfM)^2 \ \ \geq \ \  I_{k+1}(\sfM)\cdot I_{k-1}(\sfM)  
\end{equation*}
for all $1\leq k<r$ in Theorem \ref{mainthm_valMason}. 

In \cite{RoehrleUlirsch}, R\"ohrle and the fourth author use a similar bimatroid extension to reprove a log-concavity result for the number of bases of morphism of matroids that originally is a consequence of \cite[Theorem 1.3]{EurHuh}. However, a similar approach for affine morphisms of valuated matroids, as defined in \cite{IezziSchleis} and expanding on \cite{BrandtEurZhang}, does not seem to immediately yield such a result. The reason is that quotients of valuated matroids are not known to admit a natural Higgs factorization (see \cite[Section 7.3]{Oxley} for the classical story). We refer the reader to \cite{BrandenburgLohoSmith} and \cite{LorscheidJarra_flagmatroids} for further background on the intricacies of quotients of valuated matroids. 

\subsection*{Acknowledgements}
The authors would like to thank Madeline Brandt, Alex Fink, Georg Loho, Michael Joswig, Kevin K\"uhn, Arne Kuhrs, Felix R\"ohrle, Yue Ren, Pedro Souza, and Hendrik S\"u{\ss} for helpful conversations and discussions en route to this article. V.S.\ and M.U.\ have learned how to use Lorentzian polynomials during the 2023 Chow Lectures at the MPI in Leipzig, given by June Huh. We thank all speakers and organizers at this occasion. 

\subsection*{Funding} This project has received funding from the Deutsche Forschungsgemeinschaft (DFG, German Research Foundation) TRR 326 \emph{Geometry and Arithmetic of Uniformized Structures}, project number 444845124, from the DFG Sachbeihilfe \emph{From Riemann surfaces to tropical curves (and back again)}, project number 456557832, as well as the DFG Sachbeihilfe \emph{Rethinking tropical linear algebra: Buildings, bimatroids, and applications}, project number 539867663, within the  
SPP 2458 \emph{Combinatorial Synergies}.

V.S. was supported by DFG TRR 195 \emph{Symbolic Tools in Mathematics and their Application} Project-ID 286237555, and by the UKRI FLF \emph{Computational Tropical Geometry}  MR/Y003888/1.


\section{Generic extensions of valuated matroids and $M$-convex functions}

In this section and the next we only suppose that $\Gamma$ is a totally ordered additve abalian group $\Gamma$ and write $\overline{\Gamma}=\Gamma\sqcup \{\infty\}$. 

While the original definition of a valuated matroid is a valuated generalization of the symmetric basis exchange property, in \cite{Murota_valuationindependentsets} Murota established a cryptomorphic characterization of valuated matroid expanding on the notion of independent sets. 
In this setup, a \textbf{valuated matroid} of rank $r$ on $E$ is given by a map $\widetilde{\nu}_\sfM\colon {E\choose \leq r}\rightarrow \overline{\Gamma}$ that fulfils the following four properties:
\begin{enumerate}
\item There is an $S\in{E\choose r}$ such that $\widetilde{\nu}_\sfM(S)\neq \infty$.
\item For $S\subseteq T$ we have $\widetilde{\nu}_\sfM(S)\leq \widetilde{\nu}_\sfM(T)$. 
\item For $S\in{E\choose \leq r-1}$ with $\widetilde{\nu}_\sfM(S)<\infty$ there is $e\in E-S$ such that $\widetilde{\nu}_\sfM(S\cup\{e\})=\widetilde{\nu}_\sfM(S)$. 
\item For $S,T\in{E\choose \leq r}$ with $\vert S\vert <\vert T\vert$ there is $t\in T-S$ such that 
\begin{equation*}
\widetilde{\nu}_\sfM(S)+\widetilde{\nu}_\sfM(T) \ \ \geq \ \ \nu_{\sfM}\big(S\cup\{t\}\big)+\nu_{\sfM}\big(T-\{t\}\big) \ .
\end{equation*}
\end{enumerate}
Given $\widetilde{\nu}_\sfM$ one can recover $\nu_{\sfM}$ by restricting it to ${E\choose r}$ and, conversely, given $\nu_{\sfM}$, we find $\widetilde{\nu}_\sfM$ by setting
\begin{equation*}
\widetilde{\nu}_\sfM(S):=\min\big\{\nu_{\sfM}(B)\mid B\in{E\choose r} \textrm{ and } S\subseteq B\big\} 
\end{equation*} 
for $S\subseteq E$ with $\vert S\vert \leq r$. 

We also recall that, by \cite[Theorem 2.1]{Murota_valuationindependentsets}, given a valuated matroid $\sfM$ of rank $r$ on a finite set $E$, for every 
$0\leq \rho\leq r$,
the restriction of $\widetilde{\nu}_\sfM$ to ${E\choose \rho}$ defines a valuated matroid $\sfM_\rho$ of rank $\rho$.

The following proposition
is central to our proof of Theorem \ref{mainthm_valMason} and \ref{mainthm_logconcavitypolymatroids} and constructs a generic extension of an arbitrary valuated matroid to a larger ground set. 

\begin{proposition}\label{prop_genericextension}
Let $\sfM=(E,\nu_{\sfM})$ be a valuated matroid of rank $r$ on a finite ground set $E$. Choose a finite set $Q$ 
that is disjoint from $E$ and write $\widetilde{E}=Q\sqcup E$. Define a function $\nu_{\widetilde{\sfM}}\colon {\widetilde{E}\choose r}\longrightarrow \overline{\Gamma}$ by setting
\begin{equation*}
\nu_{\widetilde{\sfM}}(\widetilde{S}):=\widetilde{\nu}_\sfM(\widetilde{S}\cap E) 
\end{equation*}
for $\widetilde{S}\in{\widetilde{E}\choose r}$. Then the function $\nu_{\widetilde{\sfM}}$ defines a valuated matroid $\widetilde{\sfM}$ of rank $r$ on the ground set $\widetilde{E}$. 
\end{proposition}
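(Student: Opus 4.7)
The plan is to verify Murota's four cryptomorphic axioms (1)--(4) for the natural extension
\[
\widetilde{f}\colon \binom{\widetilde{E}}{\leq r}\longrightarrow \overline{\Gamma}, \qquad \widetilde{f}(\widetilde{S}) := \widetilde{\nu}_\sfM(\widetilde{S}\cap E),
\]
rather than attempting the valuated exchange relation \eqref{eq_Pluecker} directly on $\binom{\widetilde{E}}{r}$. Since $\widetilde{f}$ restricts to $\nu_{\widetilde{\sfM}}$ on $\binom{\widetilde{E}}{r}$, Murota's cryptomorphism will then immediately yield that $\nu_{\widetilde{\sfM}}$ is a valuated matroid of rank $r$ on $\widetilde{E}$.

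Axioms (1) and (2) are immediate: any basis $B$ of $\sfM$ satisfies $B\in\binom{\widetilde{E}}{r}$ with $\widetilde{f}(B)=\nu_\sfM(B)\neq \infty$, and monotonicity of $\widetilde{f}$ in $\widetilde{S}$ follows from monotonicity of $\widetilde{\nu}_\sfM$ in $\widetilde{S}\cap E$. For axiom (3), given $\widetilde{S}\in\binom{\widetilde{E}}{\leq r-1}$ with $\widetilde{f}(\widetilde{S})<\infty$, I would split on whether $Q\subseteq \widetilde{S}$: if not, any $e\in Q-\widetilde{S}$ satisfies $(\widetilde{S}\cup\{e\})\cap E = \widetilde{S}\cap E$ and the axiom holds trivially; if so, then $|\widetilde{S}\cap E|\leq r-1$ and axiom (3) for $\widetilde{\nu}_\sfM$ on $E$ supplies the required $e\in E-\widetilde{S}$.

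The only substantive step is axiom (4). Given $\widetilde{S},\widetilde{T}\in\binom{\widetilde{E}}{\leq r}$ with $|\widetilde{S}|<|\widetilde{T}|$, I would again split by whether $(\widetilde{T}-\widetilde{S})\cap Q$ is nonempty. In this first case, picking any $t$ in that intersection gives $(\widetilde{S}\cup\{t\})\cap E = \widetilde{S}\cap E$ and $(\widetilde{T}-\{t\})\cap E = \widetilde{T}\cap E$, so the desired inequality holds with equality. Otherwise $\widetilde{T}\cap Q\subseteq \widetilde{S}\cap Q$; combined with $|\widetilde{S}|<|\widetilde{T}|$ this forces the strict inequality $|\widetilde{S}\cap E|<|\widetilde{T}\cap E|$, and applying axiom (4) for $\widetilde{\nu}_\sfM$ to the pair $(\widetilde{S}\cap E,\widetilde{T}\cap E)\in \binom{E}{\leq r}^2$ produces $t\in (\widetilde{T}\cap E)-(\widetilde{S}\cap E)\subseteq \widetilde{T}-\widetilde{S}$ for which the corresponding inequality for $\widetilde{f}$ reads off immediately, using $(\widetilde{S}\cup\{t\})\cap E = (\widetilde{S}\cap E)\cup\{t\}$ and $(\widetilde{T}-\{t\})\cap E = (\widetilde{T}\cap E)-\{t\}$.

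The main conceptual decision, and really the only obstacle, is choosing this cryptomorphic route over a direct Plücker verification on $\binom{\widetilde{E}}{r}$. The direct approach runs into the awkward subcase where $s\in E$ is fixed in $\widetilde{S}-\widetilde{T}$ while the only available exchange partner $t$ lies in $Q\cap (\widetilde{T}-\widetilde{S})$, which would reduce to an inequality of the form $\widetilde{\nu}_\sfM(S)+\widetilde{\nu}_\sfM(T)\geq \widetilde{\nu}_\sfM(S-\{s\})+\widetilde{\nu}_\sfM(T\cup\{s\})$ that does not follow from a single application of any axiom for $\sfM$. In the cryptomorphic formulation, by contrast, adding or removing any element of $Q$ costs nothing, so every non-trivial case of each axiom collapses cleanly to a single use of the corresponding axiom for $\widetilde{\nu}_\sfM$ on the original ground set $E$.
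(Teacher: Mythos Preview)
Your proof is correct and takes a genuinely different route from the paper's. The paper verifies the valuated basis exchange relation directly on $\binom{\widetilde{E}}{r}$: given $\widetilde{s}\in\widetilde{S}-\widetilde{T}$, it splits into the case $\widetilde{s}\in Q$ (easy), the case $\widetilde{s}\in E$ with $|S|\leq |T|$ (enlarge $S$ to $S'$ of size $|T|$ and invoke that the restriction of $\widetilde{\nu}_\sfM$ to $\binom{E}{|T|}$ is a valuated matroid), and the case $\widetilde{s}\in E$ with $|S|>|T|$ (induction on $|S|-|T|$, with a further subcase analysis). Your approach instead verifies Murota's axioms (1)--(4) for the function $\widetilde{f}$ on all of $\binom{\widetilde{E}}{\leq r}$, which is considerably shorter: because adding or removing an element of $Q$ does not change $\widetilde{S}\cap E$, every case where a $Q$-element is available collapses to an equality, and the remaining cases reduce by a single application of the corresponding axiom for $\widetilde{\nu}_\sfM$ on $E$. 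You thereby avoid both the induction and the appeal to \cite[Theorem~2.1]{Murota_valuationindependentsets} that the paper needs in Case~B.1. The trade-off is that your argument invokes the full cryptomorphism (that axioms (1)--(4) imply the restriction to $\binom{\widetilde{E}}{r}$ satisfies the Pl\"ucker relation), whereas the paper only uses the forward direction and two specific consequences; but since the cryptomorphism is already stated and used in the paper, this is not a real cost.
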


\begin{proof}
 It is immediate that $\nu_{\widetilde{\sfM}}$ is not infinite everywhere. 
 So, given $\widetilde{S}, \widetilde{T}\in{\widetilde{E}\choose r}$ as well as $\widetilde{s}\in \widetilde{S}-\widetilde{T}$, we need to verify that there is $\widetilde{t}\in \widetilde{T}-\widetilde{S}$ such that 
\begin{equation*}
\nu_{\widetilde{\sfM}}\big(\widetilde{S}\big)+\nu_{\widetilde{\sfM}}\big(\widetilde{T}\big)\ \ \geq\ \ \nu_{\widetilde{\sfM}}\big(\widetilde{S}-\{\widetilde{s}\}\cup\{\widetilde{t}\}\big)+\nu_{\widetilde{\sfM}}\big(\widetilde{T}-\{\widetilde{t}\}\cup\{\widetilde{s}\}\big) \ .
\end{equation*}
For this we set $Q_{\widetilde{S}}:=Q\cap \widetilde{S}$ and $Q_{\widetilde{T}}:=Q\cap \widetilde{T}$ as well as $S=\widetilde{S}\cap E$ and $T=\widetilde{T}\cap E$, so that $\widetilde{S}=Q_{\widetilde{S}}\sqcup S$ and $\widetilde{T}=Q_{\widetilde{T}}\sqcup T$.

\textbf{Case A:} Suppose that $\widetilde{s}\in Q_{\widetilde S}$. If $Q_{\widetilde{S}}\not\supseteq Q_{\widetilde{T}}$, there is $q\in Q_{\widetilde{T}}$ with $q\notin Q_{\widetilde{S}}$. Setting $\widetilde{t}=q$, we find
\begin{equation*}\begin{split}
	\nu_{\widetilde{\sfM}}\big(\widetilde{S}\big)+\nu_{\widetilde{\sfM}}\big(\widetilde{T}\big) & \ \ =  \ \ \widetilde{\nu}_{\sfM}(S) + \widetilde{\nu}_{\sfM}(T) \\
	&\ \ = \ \ \nu_{\widetilde{\sfM}}\big(\widetilde{S}-\{\widetilde{s}\}\cup\{\widetilde{t}\}\big)+\nu_{\widetilde{\sfM}}\big(\widetilde{T}-\{\widetilde{t}\}\cup\{\widetilde{s}\}\big) \ .
\end{split}\end{equation*}
On the other hand, if $Q_{\widetilde{S}}\supseteq Q_{\widetilde{T}}$ we automatically have $\vert Q_{\widetilde{T}}\vert < \vert Q_{\widetilde{S}}\vert$, since $\widetilde{s}\in Q_{\widetilde{S}}-\widetilde{T}$. This, in particular, means that $\vert S\vert <\vert T\vert$. By \cite[Theorem 3.2]{Murota_valuationindependentsets}, there is $t\in T-S$ such that 
\begin{equation*}
\widetilde{\nu}_\sfM(S)+\widetilde{\nu}_{\sfM}(T)\ \ \geq \ \  \widetilde{\nu}_\sfM(S\cup\{t\}) + \widetilde{\nu}_{\sfM}(T-\{t\}) 
\end{equation*}
and, setting $\widetilde{t}=t$, this implies
\begin{equation*}\begin{split}
\nu_{\widetilde{\sfM}}(\widetilde{S})+\nu_{\widetilde\sfM}(\widetilde{T}) \ \ & = \ \ \widetilde{\nu}_\sfM(S)+\widetilde{\nu}_{\sfM}(T)\\
 & \geq \ \  \widetilde{\nu}_\sfM(S\cup\{t\}) + \widetilde{\nu}_{\sfM}(T-\{t\})\\
 & = \ \ \nu_{\widetilde{\sfM}}(\widetilde{S}-\{\widetilde{s}\}\cup \{\widetilde{t}\})+\nu_{\widetilde\sfM}(\widetilde{T}-\{\widetilde{t}\}\cup\{\widetilde{s}\}) \ .
\end{split}\end{equation*}

\textbf{Case B.1: } Suppose that $\widetilde{s}\in S$. If $\vert S\vert \leq \vert T\vert$, choose a subset $S\subseteq S'\subseteq E$ such that $\vert S'\vert =\vert T\vert$ and $\widetilde{\nu}_\sfM(S')=\widetilde{\nu}_\sfM(S)$. We may invoke \cite[Theorem 2.1]{Murota_valuationindependentsets}, which tells us that the restriction of $\widetilde{\nu}_\sfM$ to ${E\choose \vert T\vert}$ is a valuated matroid of rank $\vert T\vert$. This means there is $t\in T-S'$ such that 
\begin{equation*}
\widetilde{\nu}_\sfM(S')+\widetilde{\nu}_\sfM(T) \ \ \geq \ \ \widetilde{\nu}_\sfM(S'-\{\widetilde{s}\}\cup\{t\} )+\widetilde{\nu}_\sfM(T-\{t\}\cup\{\widetilde{s}\})
\end{equation*}
Setting $\widetilde{t}=t$, this implies
\begin{equation*}\begin{split}
\nu_{\widetilde{\sfM}}\big(\widetilde{S}\big)+\nu_{\widetilde{\sfM}}\big(\widetilde{T}\big)\ \ &= \ \  \widetilde{\nu}_\sfM(S)+\widetilde{\nu}_\sfM(T) \\
& = \ \  \widetilde{\nu}_\sfM(S')+\widetilde{\nu}_\sfM(T) \\
 &\geq \ \ \widetilde{\nu}_\sfM(S'-\{\widetilde{s}\}\cup\{t\} )+\widetilde{\nu}_\sfM(T-\{t\}\cup\{\widetilde{s}\}) \\
 & \geq \ \   \widetilde{\nu}_\sfM(S-\{\widetilde{s}\}\cup\{t\} )+\widetilde{\nu}_\sfM(T-\{t\}\cup\{\widetilde{s}\}) \\
 &= \ \ \nu_{\widetilde{\sfM}}\big(\widetilde{S}-\{\widetilde{s}\}\cup\{\widetilde{t}\}\big)+\nu_{\widetilde{\sfM}}\big(\widetilde{T}-\{\widetilde{t}\}\cup\{\widetilde{s}\}\big) \ , 
\end{split}\end{equation*}
since $S'-\{\widetilde{s}\}\cup\{t\} \supseteq S-\{\widetilde{s}\}\cup\{t\}$ and thus $\widetilde{\nu}_\sfM(S'-\{\widetilde{s}\}\cup\{t\} )\geq \widetilde{\nu}_\sfM(S-\{\widetilde{s}\}\cup\{t\} )$ by the definition of $\widetilde{\nu}_\sfM$. 

\textbf{Case B.2: } We now prove the case $\vert T\vert < \vert S\vert$ by induction on $k=\vert S\vert -\vert T\vert$, starting with the $k=0$ case that we have taken care of above. Suppose $k\geq 1$. We may find a $t'\in E$ such that $\widetilde{\nu}_\sfM(T) = \widetilde{\nu}_\sfM(T')$ for $T'=T\cup\{t'\}$ by the definition of $\widetilde{\nu}_{\sfM}$. If $t'=\widetilde{s}$, we use $\widetilde{\nu}_\sfM(S)\geq \widetilde{\nu}_\sfM(S-\{\widetilde{s}\})$ and find 
\begin{equation*}\begin{split}
\nu_{\widetilde{\sfM}}\big(\widetilde{S}\big)+\nu_{\widetilde{\sfM}}\big(\widetilde{T}\big)\ \ &= \ \ \widetilde{\nu}_\sfM(S) + \widetilde{\nu}_\sfM(T) \\ 
&= \ \ \widetilde{\nu}_\sfM(S) + \widetilde{\nu}_\sfM(T\cup\{\widetilde{s}\}) \\ &\geq \ \  \widetilde{\nu}_\sfM(S-\{\widetilde{s}\}) + \widetilde{\nu}_\sfM(T\cup\{\widetilde{s}\})\\
& = \ \ \nu_{\widetilde{\sfM}}\big(\widetilde{S}-\{\widetilde{s}\}\cup\{\widetilde{t}\}\big)+\nu_{\widetilde{\sfM}}\big(\widetilde{T}-\{\widetilde{t}\}\cup\{\widetilde{s}\}\big) 
\end{split}\end{equation*}
for an arbitrary $\widetilde{t}\in Q_T-Q_S$ (such a $\widetilde{t}$ exists because $\vert Q_S\vert <\vert Q_T\vert$). 

So now suppose $t'\neq \widetilde{s}$. We may apply the induction hypothesis to $S$ and $T'$ and find a $t\in T'-S$ such that 
\begin{equation*}\begin{split}
 \widetilde{\nu}_\sfM(S)+\widetilde{\nu}_\sfM(T)  
\ \ &= \ \ \widetilde{\nu}_\sfM(S)+\widetilde{\nu}_\sfM(T')\\ 
&\geq \ \ \widetilde{\nu}_\sfM(S-\{\widetilde{s}\}\cup\{t\} )+\widetilde{\nu}_\sfM(T'-\{t\}\cup\{\widetilde{s}\}) 
\end{split}\end{equation*}
If $t\neq t'$, we use $\widetilde{\nu}_\sfM(T'-\{t\}\cup\{\widetilde{s}\})\geq \widetilde{\nu}_\sfM(T-\{t\}\cup\{\widetilde{s}\})$ and, choosing $\widetilde{t}=t$, we find
\begin{equation*}\begin{split}
\nu_{\widetilde{\sfM}}\big(\widetilde{S}\big)+\nu_{\widetilde{\sfM}}\big(\widetilde{T}\big)\ \ & =\ \ \widetilde{\nu}_\sfM(S)+\widetilde{\nu}_\sfM(T) \\ 
&\geq \ \ \widetilde{\nu}_\sfM(S-\{\widetilde{s}\}\cup\{\widetilde{t}\} )+\widetilde{\nu}_\sfM(T-\{\widetilde{t}\}\cup\{\widetilde{s}\})\\
&= \ \ \nu_{\widetilde{\sfM}}\big(\widetilde{S}-\{\widetilde{s}\}\cup\{\widetilde{t}\}\big)+\nu_{\widetilde{\sfM}}\big(\widetilde{T}-\{\widetilde{t}\}\cup\{\widetilde{s}\}\big) 
\end{split}\end{equation*}
which is our claim. When $t=t'$, we choose $\widetilde{t}\in Q_{\widetilde{T}}-Q_{\widetilde{S}}$ using $\vert Q_S\vert <\vert Q_T\vert$ and then we have
\begin{equation*}\begin{split}
\nu_{\widetilde{\sfM}}\big(\widetilde{S}\big)+\nu_{\widetilde{\sfM}}\big(\widetilde{T}\big)\ \ &= \ \ \widetilde{\nu}_\sfM(S)+\widetilde{\nu}_\sfM(T)  \\ 
&= \ \ \widetilde{\nu}_\sfM(S)+\widetilde{\nu}_\sfM(T') \\  
&\geq \ \ \widetilde{\nu}_\sfM(S-\{\widetilde{s}\}\cup\{t\} )+\widetilde{\nu}_\sfM(T'-\{t\}\cup\{\widetilde{s}\})\\  
&\geq \ \ \widetilde{\nu}_\sfM(S-\{\widetilde{s}\} )+\widetilde{\nu}_\sfM(T\cup\{\widetilde{s}\})\\
& = \ \ \nu_{\widetilde{\sfM}}\big(\widetilde{S}-\{\widetilde{s}\}\cup\{\widetilde{t}\}\big)+\nu_{\widetilde{\sfM}}\big(\widetilde{T}-\{\widetilde{t}\}\cup\{\widetilde{s}\}\big)  \ ,
\end{split}\end{equation*}
since $T'=T\cup\{t\}$ and $\widetilde{\nu}_\sfM(S-\{\widetilde{s}\}\cup\{t\} )\geq \widetilde{\nu}_\sfM(S-\{\widetilde{s}\} )$. At this point we have taken care of all possible cases and the proof is complete.
\end{proof}

\begin{remark}
Assume the valuated matroid $\nu_\sfM$ is realizable by a matrix $A\in K^{r\times |E|}$ over a non-Archimedean valued field $K$ with infinite residue field, i.e. we have $\nu_\sfM(S) = -\val(\det[A_S])$ for all $S \in \binom{E}{r}$. Then, the extension $\nu_{\widetilde{\sfM}}$ is realizable by a $r \times (|Q| + |E|)$ matrix $[\,G \mid A\,]$ obtained by attaching an $r \times |Q|$ generic matrix $G$ to the left of $A$ whose entries all have valuation $0$.  
\end{remark}


For an $M$-convex function $\nu_{\sfP}\colon\Delta_E^r\rightarrow \overline{\Gamma}$ (see  \eqref{eq_MconvexPluecker}),
we analogously define a generic extension $\widetilde{\nu}_\sfP\colon \Delta_{E}^{\leq r}\rightarrow\overline{\Gamma}$ by setting
\begin{equation*}
\widetilde{\nu}_\sfP(\alpha):=\min\big\{\nu_{\sfP}(\beta)\mid \beta\in\Delta_{E}^{r} \textrm{ and } \alpha_e\leq \beta_e \textrm{ for all }e\in E\big\} 
\end{equation*} 
for $\alpha\in \Delta_{E}^{\leq r}$.

The following proposition generalizes Proposition \ref{prop_genericextension} to the setting of $M$-convex functions. This extra step allows us to prove log concavity for valuated polymatroids in Theorem \ref{mainthm_logconcavitypolymatroids} and further results in a slightly stronger log-concave inequality for both valuated matroids and polymatroids in Theorems \ref{mainthm_valMason} and \ref{mainthm_logconcavitypolymatroids}.

\begin{proposition}\label{prop_genericextensionMconvexfunction}
    Let $\nu_{\sfP}\colon \Delta_E^r\rightarrow \overline{\Gamma}$ be an $M$-convex function of rank $r$. 
    Let $Q$ be a finite set disjoint from $E$ and write $\widetilde{E} = Q \sqcup E$. 
     Then the map $\nu_{\widetilde{\sfP}}\colon \Delta_{\widetilde{E}}^r\rightarrow \overline{\Gamma}$ given by 
    \begin{equation*}    \nu_{\widetilde{\sfP}}\big((\alpha_Q,\alpha)\big):=\widetilde{\nu}_{\sfP}(\alpha)
    \end{equation*}
    for any $\widetilde{\alpha}=(\alpha_Q,\alpha) \in \Delta_{\widetilde{E}}^{r}$, where $\alpha \in \Delta_{E}^{\leq r}$ and $\alpha_Q \in \Delta_{Q}^{r-\vert\alpha\vert}$, defines an $M$-convex function. 
\end{proposition}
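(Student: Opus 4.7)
The plan is to deduce this proposition from Proposition~\ref{prop_genericextension} by passing through the multi-symmetric lift characterization of $M$-convex functions in Proposition~\ref{prop_multisymmetriclift}, rather than redoing the long case analysis.

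First, let $\widehat\sfP$ be the multi-symmetric lift of $\nu_\sfP$: a valuated matroid of rank $r$ on a ground set $\widehat E = \bigsqcup_{e \in E} \widehat E_e$ consisting of $r$ labelled copies of each $e \in E$, invariant under $G = \prod_{e \in E} \mathrm{Sym}(\widehat E_e)$. By Proposition~\ref{prop_multisymmetriclift}, $M$-convex functions on $\Delta_E^r$ correspond bijectively to $G$-symmetric valuated matroids on $\widehat E$ via $\nu_\sfP(\alpha) = \nu_{\widehat\sfP}(\widehat B)$ for any basis $\widehat B$ of multiplicity profile $\alpha$. Unpacking the two extension formulas shows $\widetilde\nu_\sfP(\alpha) = \widetilde\nu_{\widehat\sfP}(\widehat S)$ for any $\widehat S \subseteq \widehat E$ of profile $\alpha$, since both reduce to the same minimum taken over supersets of $\widehat S$ that are bases of $\widehat\sfP$.

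Next, enlarge $Q$ to $\widehat Q = \bigsqcup_{q \in Q} \widehat Q_q$ with $r$ labelled copies of each $q \in Q$ and let $G_Q = \prod_{q \in Q} \mathrm{Sym}(\widehat Q_q)$. Apply Proposition~\ref{prop_genericextension} to $\widehat\sfP$ with auxiliary set $\widehat Q$, producing a valuated matroid $\widetilde{\widehat\sfP}$ of rank $r$ on $\widehat E \sqcup \widehat Q$ with $\nu_{\widetilde{\widehat\sfP}}(\widetilde S) = \widetilde\nu_{\widehat\sfP}(\widetilde S \cap \widehat E)$. The right-hand side is $G$-invariant (it is the minimum of the $G$-invariant $\nu_{\widehat\sfP}$ over a $G$-invariant collection) and plainly $G_Q$-invariant (it ignores $\widehat Q$), so $\widetilde{\widehat\sfP}$ is $(G \times G_Q)$-symmetric. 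The orbit space of $\widehat E \sqcup \widehat Q$ under $G \times G_Q$ is $\widetilde E = Q \sqcup E$, so the backward direction of Proposition~\ref{prop_multisymmetriclift} descends $\widetilde{\widehat\sfP}$ to an $M$-convex function on $\Delta_{\widetilde E}^r$. For $\widetilde\alpha = (\alpha_Q, \alpha) \in \Delta_{\widetilde E}^r$ and any representative basis $\widetilde S$ of profile $\widetilde\alpha$, the descended value is $\widetilde\nu_{\widehat\sfP}(\widetilde S \cap \widehat E) = \widetilde\nu_\sfP(\alpha)$, matching the defining formula for $\nu_{\widetilde\sfP}$.

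The main obstacle I anticipate is confirming that the $G$-symmetry of the lift propagates through both the independent-set extension and the application of Proposition~\ref{prop_genericextension}, and unpacking Proposition~\ref{prop_multisymmetriclift} carefully in both directions in the valuated setting; once these technicalities are in place, the proof collapses to a direct matching of definitions. A direct alternative would be to mimic the entire case analysis of Proposition~\ref{prop_genericextension} line-by-line, replacing set unions/differences by integer-vector sums/differences and invoking an $M$-convex analog of Murota's four-axiom characterization of valuated matroids (monotonicity, saturation, size-increasing exchange, and a rank-$\rho$ truncation yielding again an $M$-convex function); this route is longer but purely mechanical.
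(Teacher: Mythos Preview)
Your proposal is correct and follows essentially the same route as the paper's proof: both lift $\nu_\sfP$ to a valuated matroid via the multisymmetric lift (Proposition~\ref{prop_multisymmetriclift}), apply the generic extension for valuated matroids (Proposition~\ref{prop_genericextension}) with an enlarged auxiliary set over $Q$, and then descend back via Proposition~\ref{prop_multisymmetriclift} after checking that the resulting valuated matroid is exactly the multisymmetric lift of $\nu_{\widetilde\sfP}$. The only cosmetic differences are that the paper uses cage-sized fibers over $E$ rather than your uniform $r$ copies, and that it states the descent step as a direct identification of the extended lift with $M_{\widetilde\pi}(\nu_{\widetilde\sfP})$ rather than as a symmetry argument---these are equivalent formulations.
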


In order to prove Proposition \ref{prop_genericextensionMconvexfunction} we need to develop a valuated generalization of multisymmetric lifts of polymatroids, as defined, for instance, in \cite[Definition 3.1]{EurLarson}. 

Given a function $\nu\colon \Delta_E^r\rightarrow\overline{\Gamma}$, for any $s\in E$ we write $a_s$ for the minimal non-negative integer such that $a_s\geq \alpha_s$ for all $\alpha\in\Delta_E^r$ with $\nu(\alpha)\neq\infty$. We call the tuple $\mathbf{a}=(a_s)_{s\in E}\in\Z^E$ the \textbf{cage} of $\nu$.  Consider a map $\pi\colon E'\rightarrow E$ of finite sets such that for all $s\in E$ we have $\vert \pi^{-1}(s)\vert=a_s$. We observe that the map
    \begin{equation}\label{eq_multisymmetriclift}
        S'\longmapsto \sum_{s'\in S'}e_{\pi(s')}
    \end{equation}
defines is a natural surjection between ${E'\choose r}\rightarrow \Delta_E^r$. 

Given a function $\nu\colon \Delta_E^r\rightarrow\overline{\Gamma}$ with cage $\mathbf{a}=(a_s)_{s\in E}$, we define the \textbf{multisymmetric lift} of $\nu$ as the function $M_\pi(\nu)\colon{E'\choose r}\rightarrow \overline{\Gamma}$ given by 
\begin{equation}\label{eq_multisymmetriclift}
    M_\pi(\nu)(S'):=\nu\Big(\sum_{s'\in S'} e_{\pi(s')}\Big)
\end{equation}
for $S'\in{E'\choose r}$. The following proposition tells us that the $M$-convexity of $\nu$ is equivalent to the valuated basis exchange property of $M_\pi(\nu)$.

\begin{proposition}\label{prop_multisymmetriclift}
The function $\nu\colon \Delta_E^r\rightarrow\overline{\Gamma}$ is $M$-convex if and only if its multisymmetric lift $M_\pi(\nu)\colon E'\rightarrow\overline{\Gamma}$ defines a valuated matroid.
\end{proposition}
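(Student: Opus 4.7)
The plan is to prove both directions by translating combinatorial exchanges between $S', T' \in \binom{E'}{r}$ and the multiset images $\alpha = \sum_{u \in S'} e_{\pi(u)}, \beta = \sum_{v \in T'} e_{\pi(v)} \in \Delta_E^r$. A key observation is that if $S', T'$ project to $\alpha, \beta$, then for each $s \in E$ one has $|S' \cap \pi^{-1}(s)| = \alpha_s$ and $|T' \cap \pi^{-1}(s)| = \beta_s$; moreover, since $|\pi^{-1}(s)| = a_s \geq \max(\alpha_s, \beta_s)$ whenever both values of $\nu$ are finite, there is enough room in each fibre to control intersections of $S'$ and $T'$.

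For the forward direction, assume $\nu$ is $M$-convex and fix $S', T', s'$ with $s' \in S' - T'$. Set $s = \pi(s')$. If $\alpha_s > \beta_s$, the $M$-convex exchange \eqref{eq_MconvexPluecker} yields $t \in E$ with $\beta_t > \alpha_t$ satisfying the multiset inequality; since $|T' \cap \pi^{-1}(t)| > |S' \cap \pi^{-1}(t)|$, pigeonhole produces $t' \in T' \cap \pi^{-1}(t) - S'$, and $S' - s' + t'$ and $T' - t' + s'$ project to $\alpha - e_s + e_t$ and $\beta - e_t + e_s$, giving the valuated basis exchange. If instead $\alpha_s \leq \beta_s$, a short count using $s' \in S' \cap \pi^{-1}(s) - T'$ shows that $|T' \cap \pi^{-1}(s) - S'| \geq 1$; choosing $t'$ there makes the swap $S' - s' + t'$, $T' - t' + s'$ preserve the multiset projections, so the exchange inequality holds with equality.

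For the backward direction, assume $M_\pi(\nu)$ is a valuated matroid. Given $\alpha, \beta \in \Delta_E^r$ with finite values and $s \in E$ satisfying $\alpha_s > \beta_s$, I will choose lifts $S', T' \in \binom{E'}{r}$ that project to $\alpha, \beta$ and are \emph{maximally overlapping in each fibre}, meaning $|S' \cap T' \cap \pi^{-1}(u)| = \min(\alpha_u, \beta_u)$ for all $u \in E$; this is possible because $|\pi^{-1}(u)| = a_u$ is large enough. The point of this careful choice is that under it, $t' \in T' - S'$ automatically forces $\beta_{\pi(t')} > \alpha_{\pi(t')}$, since otherwise $T' \cap \pi^{-1}(\pi(t')) \subseteq S'$. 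Now pick $s' \in S' \cap \pi^{-1}(s) - T'$ (non-empty since $\alpha_s > \beta_s$), apply the valuated basis exchange for $M_\pi(\nu)$ to obtain $t' \in T' - S'$, and set $t = \pi(t')$; the resulting inequality translates directly into \eqref{eq_MconvexPluecker} for $\nu$ at $\alpha, \beta, s, t$. Non-triviality of $\nu$ is immediate from non-triviality of $M_\pi(\nu)$.

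The main obstacle is the backward direction, specifically the choice of the lifts $S', T'$: without the maximal-overlap condition, the element $t'$ produced by the valuated basis exchange may have $\pi(t')$ with $\beta_{\pi(t')} = \alpha_{\pi(t')}$, yielding only an inequality on pairs that collapse and not a genuine $M$-convex exchange. The maximal-overlap trick is precisely what eliminates these degenerate cases and converts the basis-level exchange into the multiset-level exchange required by \eqref{eq_MconvexPluecker}.
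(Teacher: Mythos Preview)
Your proposal is correct and follows essentially the same approach as the paper: the same case split on $\alpha_s$ versus $\beta_s$ in the direction proving $M_\pi(\nu)$ is a valuated matroid, and the same maximal-overlap lift $|S'\cap T'\cap\pi^{-1}(e)|=\min(\alpha_e,\beta_e)$ in the converse direction to guarantee that the exchanged element $t'$ satisfies $\beta_{\pi(t')}>\alpha_{\pi(t')}$. Your write-up is slightly more explicit than the paper in justifying the existence of $t'\in T'\cap\pi^{-1}(s)-S'$ in the case $\alpha_s\le\beta_s$, and in explaining why the maximal-overlap choice is needed.
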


\begin{proof}
    Suppose first that $M_\pi(\nu)$ is a valuated matroid. Let $\alpha,\beta\in\Delta_E^r$ as well as $s\in E$ with $\alpha_s>\beta_s$. We may choose $S',T'\in{E'\choose r}$ such that 
     \begin{equation*}
         \sum_{s'\in S'}e_{\pi(s')}=\alpha \qquad \textrm{ and }\qquad \sum_{t'\in T'}e_{\pi(t')}=\beta  
     \end{equation*}
    with $|S' \cap T' \cap \pi^{-1}(e)| = \min(\alpha_e , \beta_e)$ for all $e \in E$. We then have $S' \cap \pi^{-1}(s) \supsetneq T' \cap \pi^{-1}(s)$, so there exists $\widetilde{s}\in (S'-T') \cap \pi^{-1}(s)$. Since $M_\pi(\nu)$ is a valuated matroid, there is $\widetilde{t}\in T'-S'$ such that 
    \begin{equation*}
        M_\pi(\nu)(S')+M_\pi(\nu)(T') \ \ \geq \ \ M_\pi(\nu)\big(S'-\{\widetilde{s}\}\cup\{\widetilde{t}\}\big)+M_\pi(\nu)\big(T'-\{\widetilde{t}\}\cup\{\widetilde{s}\}\big) \ .
    \end{equation*}
    Note that, in particular, $t:= \pi(\widetilde t)$ satisfies $\beta_t > \alpha_t$. We also have
    \begin{equation*}
    \nu(\alpha)+\nu(\beta) \ \ \geq \ \ \nu(\alpha-e_s+e_t) + \nu(\beta-e_t+e_s)
    \end{equation*}
    by the definition of $M_\pi(\nu)$ in \eqref{eq_multisymmetriclift},
    showing that $\nu$ is $M$-convex.

    Conversely, suppose that $\nu\colon \Delta_E^r\rightarrow\overline{\Gamma}$ is $M$-convex. Let $S',T'\in{E'\choose r}$ as well as $\widetilde{s}\in S'-T'$. Set $s=\pi(\widetilde{s})$ as well as \begin{equation*}
         \alpha = \sum_{s'\in S'}e_{\pi(s')} \qquad \textrm{ and }\qquad \beta = \sum_{t'\in T'}e_{\pi(t')}  \ .
    \end{equation*}
    We are now in one of two cases:

    \textbf{Case A:} When $\alpha_s\leq \beta_s$, we may choose an arbitrary element $\widetilde{t}\in T'-S'$ with $\pi(\widetilde{t})=s$. We then have
    \begin{equation*}\begin{split}
        M_\pi(\nu)(S')+M_\pi(\nu)(T') \ \ &= \ \ \nu(\alpha)+ \nu(\beta) \\ 
        &= \ \ M_\pi(\nu)\big(S'-\{\widetilde{s}\}\cup\{\widetilde{t}\}\big)+M_\pi(\nu)\big(T'-\{\widetilde{t}\}\cup\{\widetilde{s}\}\big) \ .
    \end{split}\end{equation*}

    \textbf{Case B:} When $\alpha_s>\beta_s$, we use that $\nu$ is $M$-convex to find $t\in E$ with $\beta_t>\alpha_t$ such that 
    \begin{equation*}
    \nu(\alpha)+\nu(\beta) \ \ \geq \ \ \nu(\alpha-e_s+e_t) + \nu(\beta-e_t+e_s) \ . 
    \end{equation*}
    Choose an element $\widetilde{t}\in T'-S'$ with $\pi(\widetilde{t})=t$. We then have 
    \begin{equation*}\begin{split}
        M_\pi(\nu)(S')+M_\pi(\nu)(T') \ \ &= \ \ 
        \nu(\alpha) + \nu(\beta) \\ 
        &\geq \ \ \nu(\alpha-e_s+e_t) + \nu(\beta-e_t+e_s) \\
        &= \ \ M_\pi(\nu)\big(S'-\{\widetilde{s}\}\cup\{\widetilde{t}\}\big)+M_\pi(\nu)\big(T'-\{\widetilde{t}\}\cup\{\widetilde{s}\}\big) \ .
    \end{split}\end{equation*}
    Hence $M_\pi(\nu)$ defines a valuated matroid.     
\end{proof}

Armed with this observation, we can now prove Proposition \ref{prop_genericextensionMconvexfunction}.

\begin{proof}[Proof of Proposition \ref{prop_genericextensionMconvexfunction}]
    Let $\nu_{\sfP}\colon \Delta_E^r\rightarrow\overline{\Gamma}$ be an $M$-convex function. By Proposition \ref{prop_multisymmetriclift}, the multisymmetric lift $M_\pi(\nu_\sfP) \colon E'\rightarrow\overline{\Gamma}$ is a valuated matroid of rank $r$. Choose a finite set $Q'$ with $\vert Q'\vert =\vert Q\vert \cdot r$ that is disjoint from $E'$ and a map $\pi_Q\colon Q'\rightarrow Q$ such that $\vert \pi^{-1}_Q(q)\vert=r$ for all $q\in Q$. Set $\widetilde{E'}=Q'\sqcup E'$, $\widetilde{E}=Q\sqcup E$, and define $\widetilde{\pi}\colon \widetilde{E'}\rightarrow \widetilde{E}$ by 
    \begin{equation*}
        \widetilde{\pi}(\widetilde{e})=\begin{cases}
            \pi_Q(\widetilde{e}) & \text{ if } \widetilde{e}\in Q'\ ,\\
            \pi(\widetilde{e}) & \text{ if } \widetilde{e}\in E' \ .
        \end{cases}
    \end{equation*} 
    Now apply Proposition \ref{prop_genericextension} to get that the generic extension of $M_\pi(\nu_\sfP)$ to the ground set $\widetilde{E'}=Q'\sqcup E'$ defines a valuated matroid of rank $r$ on $\widetilde{E'}$. Observe that the generic extension of $M_\pi(\nu_\sfP)$ is the multisymmetric lift $M_{\widetilde{\pi}}(\nu_{\widetilde{\sfP}})$ of $\nu_{\widetilde{\sfP}}$. Therefore, Proposition \ref{prop_multisymmetriclift} implies that $\nu_{\widetilde{\sfP}}$ is $M$-convex. 
\end{proof}
An alternative proof of Proposition \ref{prop_genericextensionMconvexfunction} can be given using generic extensions of \emph{$M^\natural$-functions}, as defined in \cite{MurotaShiouraIII}, by applying \cite[Proposition 3]{MurotaShiouraII} and \cite[Lemma 6]{Murota_strongermultipleexchangeproperty} (also see \cite[Appendix C]{HusicLohoSmithVegh}).


\section{Valuated bimatroids -- axioms and  examples}


In this section we recall the definition of valuated bimatroids, as introduced in \cite{Murota_valuatedbimatroids}, and discuss basic properties and examples. 

\begin{definition}\label{def:valuatedmatrixoid}
Let $E,F$ be finite sets and denote by ${E\choose \ast}\times{F\choose \ast}$ the set of pairs $(I,J)$ with $I\subseteq E$, $J\subseteq F$, and $\vert I\vert=\vert J\vert$. A {\bf valuated bimatroid} $\sfA$ over $\overline{\Gamma}$ on the rows $E$ and columns $F$ is given by a {\bf minor valuation function} $\mu_\sfA\colon{E\choose\ast}\times{F\choose \ast}\rightarrow\overline{\Gamma}$ that satisfies the following axioms:
\begin{enumerate}
\item $\mu_\sfA(\emptyset,\emptyset)=0$.
\item For all $(I,J), (I',J') \in {E\choose \ast}\times{F\choose \ast}$ we have:
    \begin{enumerate}[itemindent=0ex, label={(\roman*)}]
    \item If $i' \in I' - I$ at least one of the two statements holds: 
    \begin{itemize}[itemindent=0ex]
        \item there is $i \in I - I'$ such that 
        \begin{equation*}\mu_\sfA(I,J) + \mu_\sfA(I',J') \ \  \geq \ \ \mu_\sfA\big(I-\{i\}\cup\{ i'\},J\big) + \mu_\sfA\big(I'-\{i'\}\cup\{ i\},J'\big) ,\end{equation*} 
        \item there is $j' \in J' - J$ such that 
        \begin{equation*}\mu_\sfA(I,J) + \mu_\sfA(I',J') \ \ \geq \ \  \mu_\sfA\big(I \cup \{i'\},J \cup \{j'\}\big) + \mu_\sfA\big(I' - \{i'\} ,J' - \{j'\} \big).\end{equation*}
    \end{itemize}
    \item If $j \in J - J'$ then at least one of the two statements holds:
    \begin{itemize}[itemindent=0ex]
        \item there is $i \in I - I'$ such that 
        $$\mu_\sfA(I,J) +  \mu_\sfA(I',J') \ \  \geq \ \ \mu_\sfA\big(I - \{i\},J - \{j\}\big) + \mu_\sfA\big(I' \cup \{i\} ,J' \cup \{j\}\big),$$
        \item there is $j' \in J' - J$ such that 
        $$\mu_\sfA(I,J) + \mu_\sfA(I',J') \ \ \geq \ \  \mu_\sfA\big(I ,J-\{j\}\cup\{ j'\}\big) + \mu_\sfA\big(I' ,J'-\{j'\}\cup\{ j\}\big).$$
    \end{itemize}
    \end{enumerate}
\end{enumerate}
\end{definition}

Given a valuated bimatroid $\sfA$ on rows $E$ and columns $F$, we may associate to this datum a function $\nu_{\widehat{\sfA}}\colon {E\sqcup F\choose \vert E\vert}\rightarrow\overline{\Gamma}$ by setting
\begin{equation*}
    \nu_{\widehat{\sfA}}(S):=\mu_\sfA(E-S, S\cap F)
\end{equation*}
for $S\subseteq E\sqcup F$ with $\vert S\vert=\vert E\vert$. 
Using this function, we can characterize valuated bimatroids in more compact terms, as follows:

\begin{proposition}\label{prop_extendedmatroid}
Let $E$ and $F$ be two finite sets. A function $\mu_\sfA\colon{E\choose \ast}\times{F\choose \ast} \rightarrow \overline{\Gamma}$ is the minor valuation function of a bimatroid $\sfA$ if and only if $\nu_{\widehat{\sfA}}$ is the basis valuation function of a valuated matroid $\widehat{\sfA}$ of rank $\vert E\vert$ on $E\sqcup F$ with $\nu_{\widehat{\sfA}}(E)=0$. 
\end{proposition}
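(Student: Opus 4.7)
The plan is to translate between the two data via the bijection
\begin{equation*}
S \ \longleftrightarrow \ (I,J) \ = \ (E-S,\, S\cap F)
\end{equation*}
and to observe that the four sub-cases in the valuated bimatroid axiom correspond precisely to the four cases of the valuated matroid exchange axiom indexed by whether the distinguished elements lie in $E$ or in $F$.

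First I would check that $\nu_{\widehat{\sfA}}$ is well-defined on ${E\sqcup F\choose |E|}$: if $|S|=|E|$ then $|E-S|=|E|-|S\cap E|=|S\cap F|$, so $(E-S,S\cap F)\in{E\choose\ast}\times{F\choose\ast}$. The map $S\mapsto (E-S,S\cap F)$ is a bijection with inverse $(I,J)\mapsto (E-I)\sqcup J$. Moreover, $\nu_{\widehat{\sfA}}(E)=\mu_\sfA(\emptyset,\emptyset)$, so axiom (1) of Definition \ref{def:valuatedmatrixoid} is exactly the condition $\nu_{\widehat{\sfA}}(E)=0$.

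For the equivalence of the exchange axioms, I would set up dictionaries in both directions. Given $(I,J)$ and $(I',J')$, let $S=(E-I)\sqcup J$ and $T=(E-I')\sqcup J'$. A routine computation shows the following translation table:
\begin{itemize}
\item if $i'\in I'-I$ then $i'\in S-T$ (and $i'\in E$);
\item if $j\in J-J'$ then $j\in S-T$ (and $j\in F$);
\item exchanging with $i\in I-I'$ corresponds to picking $t=i\in (T-S)\cap E$;
\item exchanging with $j'\in J'-J$ corresponds to picking $t=j'\in (T-S)\cap F$.
\end{itemize}
Under this dictionary, each of the four exchange inequalities in Definition \ref{def:valuatedmatrixoid}(2) translates verbatim into the valuated matroid exchange inequality
\begin{equation*}
\nu_{\widehat{\sfA}}(S)+\nu_{\widehat{\sfA}}(T)\ \geq\ \nu_{\widehat{\sfA}}(S-\{s\}\cup\{t\})+\nu_{\widehat{\sfA}}(T-\{t\}\cup\{s\})
\end{equation*}
by just recomputing $E-(S-\{s\}\cup\{t\})$ and $(S-\{s\}\cup\{t\})\cap F$ in each of the four sign patterns of $(s,t)\in (E\sqcup F)^2$.

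With this dictionary in hand, the biconditional reduces to case-splitting. For the forward direction, given $S,T\in{E\sqcup F\choose |E|}$ and $s\in S-T$, I would split on whether $s\in E$ (use axiom (i) with $i'=s$) or $s\in F$ (use axiom (ii) with $j=s$); in each case the bimatroid axiom delivers an exchange partner that, via the dictionary, is the required $t\in T-S$. For the converse, starting from $(I,J),(I',J')$ and an element of $I'-I$ (resp.\ $J-J'$), I would apply the valuated matroid exchange axiom to the corresponding $s\in S-T$ and then observe that the resulting $t\in T-S$ is either in $E$ or in $F$, yielding respectively the first or second sub-case of (i) (resp.\ (ii)).

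The only conceptual obstacle is purely bookkeeping: verifying that the four rewritten inequalities match the four bimatroid sub-inequalities on the nose. This is mechanical provided one sets up the dictionary above carefully, so the main care is in consistently tracking which of $s,t$ lie in $E$ versus $F$ and how complementation and restriction interact. No further combinatorial input is required.
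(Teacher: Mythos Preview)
Your proposal is correct and follows exactly the same approach as the paper: both argue that axiom (1) corresponds to $\nu_{\widehat{\sfA}}(E)=0$ and that axiom (2) is precisely the valuated basis exchange property for $S=(E-I)\sqcup J$ and $T=(E-I')\sqcup J'$, split into four sub-cases according to whether $s$ and $t$ lie in $E$ or $F$. Your version simply makes the bookkeeping more explicit than the paper's one-sentence proof.
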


We call $\widehat{\sfA}$ the \textbf{valuated matroid associated to $\sfA$}.

\begin{proof}[Proof of Proposition \ref{prop_extendedmatroid}]
Property (1) of valuated bimatroids is expressing the fact that $\nu_{\widehat{\sfA}}(E) = 0$, and property (2) is expressing the Pl\"ucker relations \eqref{eq_Pluecker} defining valuated matroids for the sets $S = (E - I) \sqcup J$ and $T = (E - I') \sqcup J'$, in the two cases depending on whether $s \in I^c -  (I')^c$ or $s \in J - J'$, and also the corresponding two cases for $t \in T$. 
\end{proof}

\begin{example}[Representable bimatroids]\label{example_tropicalization}
Let $E$ and $F$ be finite sets and $A\in K^{E\times F}$ an $E\times F$ matrix over a valued non-Archimedean field $K$. Then the function $\mu_A$ given by $$\mu_A(I,J)=\val\big(\det [A]_{I,J}\big)$$ for $(I, J) \in{E\choose \ast}\times{F\choose \ast}$ naturally defines a valuated bimatroid $A^{\trop}$ on the ground set $E\times F$. 

In order to see this, we consider the matrix
\begin{equation*}
\left[
\begin{array}{c|c}
I_E & A 
\end{array} \right]= 
\left[\begin{array}{ccc|ccc}
1 & & & & &  \\
& \ddots & &  & A & \\
& & 1& & & 
\end{array}
\right]\in K^{E\times (E\sqcup F)}\end{equation*} 
that is extended by the $E\times E$ identity matrix $I_E$ and note that
\begin{equation*}
\det [I_m\vert A]_{E,I^c\sqcup J}=\pm \det[A]_{I,J} \ .
\end{equation*}
The valuations of the maximal minors in $[I_m\vert A]$ form a rank-$|E|$ valuated matroid satisfying $\nu_{\widehat \sfA}(E)=0$, hence the valuations of all minors of $A$ form a valuated bimatroid by Proposition \ref{prop_extendedmatroid}.  

We refer to $A^{\trop}$ as the \textbf{tropicalization} of $A$ and say that a valuated bimatroid arising in this fashion is \textbf{representable} (or \textbf{realizable}) by the matrix $A$ over $K$. 
\end{example}

Given a square matrix 
\begin{equation*}
A=[a_{ij}]_{1\leq i,j\leq n}\in \overline{\Gamma}^{n\times n},
\end{equation*}
its \textbf{tropical determinant} is defined by 
\begin{equation*}
\det A =\min_{\sigma\in S_n} \big\{a_{1\sigma(1)}+\cdots + a_{n\sigma(n)}\big\} \ .
\end{equation*}
We note that, unlike for the classical determinant of a square matrix over a field $K$, this formula is invariant under row and column exchanges. 

\begin{example}[Valuated bimatroids of Stiefel type]\label{example_Stiefelbimatroids}
Let $E$ and $F$ be finite sets. Given a matrix $A\in\overline{\Gamma}^{E\times F}$, the map $\mu_A\colon {E\choose\ast}\times{F\choose \ast}\rightarrow \overline{\Gamma}$ given by 
\begin{equation*}
\mu_A(I,J)=\det [A]_{I,J}
\end{equation*}
for $(I,J)\in {E\choose\ast}\times{F\choose \ast}$ defines a valuated bimatroid $\St(A)$ on the rows $E$ and columns $F$.
This is in fact a realizable valuated bimatroid, as in \cite{FinkRincon_Stiefel}: Choose a generic lift of $A$ to a matrix over some valued non-Archimedean field and apply the reasoning in Example \ref{example_tropicalization}.
We say that valuated bimatroids of the form $\St(A)$ for a matrix $A\in\overline{\Gamma}^{E\times F}$ are of \textbf{Stiefel type}. 
\end{example}

\begin{example}[Transpose]
The \textbf{transpose} $\sfA^T$ of a bimatroid $\sfA$ 
has the minor valuation function $\mu_{\sfA^T}\colon {F\choose \ast}\times{E\choose \ast}\rightarrow\overline{\Gamma}$ given by 
\begin{equation*}
\mu_{\sfA^T}(J,I)=\mu_\sfA(I,J) 
\end{equation*}
for $J\times I\in {F\choose \ast}\times{E\choose \ast}$.
Phrased in terms of valuated matroids on $E\sqcup F$, the transpose $\sfA^T$ of $\sfA$ is exactly the dual valuated matroid. It is an immediate consequence of this definition that $(\sfA^T)^T=\sfA$ for any valuated bimatroid $\sfA$.
\end{example}



\section{Lorentzian polynomials and log-concavity}

From now on we assume that $\Gamma$ is an additive subgroup of $\R$, so that it makes sense to consider exponentials of elements in $\Gamma$.
Before proving Theorems \ref{mainthm_valMason}, \ref{mainthm_logconcavitypolymatroids}, and \ref{mainthm_logconcavitybimatroids}, we recall the definition and basic properties of \textbf{Lorentzian polynomials} from \cite{BrandenHuh}. Let $n$ and $d$ be non-negative integers and denote by $H_n^d\subseteq \R[w_1,\ldots, w_n]$ the set of homogeneous polynomials with real coefficients of degree $d$ in $n$ variables $w_1, \ldots, w_n$. We write a polynomial $f(w)\in H_n^d$ in the variables $w=(w_1,\ldots, w_n)$ as 
\begin{equation*}
    f(w)=\sum_{\alpha\in\Delta_n^d}a_\alpha w^\alpha
\end{equation*}
using multi-index notation $w^\alpha=w_1^{\alpha_1}\cdots w_n^{\alpha_n}$ for $\alpha=(\alpha_1,\ldots, \alpha_n)\in \Z_{\geq 0}^n$. In this notional logic, we also set 
\begin{equation*}
\partial^\alpha f=\Big(\frac{\partial}{\partial w_1}\Big)^{\alpha_1}\cdots \Big(\frac{\partial}{\partial w_n}\Big)^{\alpha_n}f \ .
\end{equation*}

Denote by $P_n^d$ the open subset of polynomials in $H_{n}^d$ for which all coefficients $a_\alpha$ are positive. The subspace $\accentset{\circ}{L}_n^d\subseteq P_n^d$ of \textbf{strictly Lorentzian polynomials} is inductively given by $\accentset{\circ}{L}_n^0=P_n^0$, $\accentset{\circ}{L}_n^1=P_n^1$, as well as
\begin{equation*}
    \accentset{\circ}{L}_n^2=\big\{f\in P_n^2 \bigmid \Hess(f) \textrm{ has the Lorentzian signature } (+,-,\ldots,-) \big\},
\end{equation*}
and
\begin{equation*}
\accentset{\circ}{L}_n^d=\big\{f\in P_n^d \bigmid \partial^\alpha f\in \mathring{L}_n^2 \textrm{ for all }\alpha\in\Delta_n^{d-2} \big\} \ .
\end{equation*}
The space $L_n^d$ of \textbf{Lorentzian polynomials} is defined to be the closure of $\accentset{\circ}{L}_n^d$ in $H_n^d$.  

Lorentzian polynomials enjoy several remarkable properties. We recall three of them that will play a role in the proofs of Theorems \ref{mainthm_valMason}, \ref{mainthm_logconcavitypolymatroids}, and \ref{mainthm_logconcavitybimatroids}.
\begin{itemize}
\item Given a fixed constant $0<q\leq 1$, a function $\nu_{\sfP}\colon \Delta_E^r\rightarrow\overline{\Gamma}$ is $M$-convex if and only if the polynomial 
\begin{equation*}
    f_{\sfP}(w)=\sum_{\alpha\in \Delta_E^r}\frac{q^{\nu_\sfP(\alpha)}}{\alpha!}\prod_{s\in E}w_s^{\alpha_s}
\end{equation*}
is Lorentzian (see \cite[Theorem 3.14]{BrandenHuh}). In particular, for a valuated matroid $\sfM$ of rank $r$ on a finite set $E$, the polynomial
\begin{equation*}
f_\sfM(w)=\sum_{S\in {E\choose r}} q^{\nu_{\sfM}(S)} \prod_{s\in S} w_{s} 
\end{equation*}
is Lorentzian. 
\item Given a Lorentzian polynomial $f(w)\in L_m^d$ and a matrix $A\in\R_{\geq 0}^{m\times n}$, the linear coordinate change $f(Aw)$ is also Lorentzian (see \cite[Theorem 2.10]{BrandenHuh}). 
\item A homogeneous polynomial $f(x,y)=\sum_{k=0}^d a_kx^ky^{d-k}$ in two variables with $a_k\geq 0$ is Lorentzian if and only if the sequence $a_k$ is ultra log-concave (see \cite[Example 2.26]{BrandenHuh}). 
\end{itemize}

We now provide the proof of our main results. Since Theorem \ref{mainthm_logconcavitypolymatroids} generalizes Theorem \ref{mainthm_valMason}, we simply provide a proof of the former.

\begin{proof}[Proof of Theorem \ref{mainthm_logconcavitypolymatroids}]
    Let $\sfP=(E,\nu_{\sfP})$ be a valuated polymatroid of rank $r$ on a finite set $E$, given in terms of an $M$-convex function $\nu_P\colon \Delta^r_E\rightarrow\overline{\Gamma}$. Choose an element $0\notin E$ and set $\widetilde{E}=\{0\}\sqcup E$. Using Proposition \ref{prop_genericextensionMconvexfunction} we may form the generic extension $\widetilde{\sfP}$ of $\sfP$ to $\widetilde{E}$.
      
Fix a constant $0<q\leq 1$. By \cite[Theorem 3.14]{BrandenHuh}, the polynomial
\begin{equation*}
f_{\widetilde{\sfP}}(w)=\sum_{\widetilde{\alpha}\in \Delta^r_{\widetilde{E}}} \frac{q^{\nu_{\widetilde{\sfP}}(\widetilde{\alpha})}}{\widetilde{\alpha}!}\prod_{\widetilde{s}\in \widetilde{E}} w_{\widetilde{s}}^{\widetilde{\alpha}_{\widetilde{s}}} 
\end{equation*}
is Lorentzian. Using the decomposition $\widetilde{E}=\{0\}\sqcup E$, we can rewrite
\begin{equation*}
f_{\widetilde{\sfP}}(w)=\sum_{k=0}^r\frac{w_0^{r-k}}{(r-k)!} \left(\sum_{\alpha\in\Delta_E^k}\frac{q^{\widetilde{\nu}_{\sfP}(\alpha)}}{\alpha!} \prod_{s\in E}w_s^{\alpha_s}\right) \ .
\end{equation*}
We may now set $w_0=x$ as well as $w_s=y$ for all $s\in E$. 
The resulting polynomial is
\begin{equation*}\begin{split}
g(x,y)\ \ &= \ \ \sum_{k=0}^r\sum_{\alpha\in\Delta_E^k}  \frac{1}{(r-k)!}q^{\widetilde{\nu}_{\sfP}(\alpha)}\cdot  x^{r-k}\cdot y^{k}\\ &= \ \  \sum_{k=0}^r \frac{1}{(r-k)!} I_k(\sfP)\cdot x^{r-k}\cdot y^k \ .
\end{split}\end{equation*}
The polynomial $g(x,y)$ is Lorentzian by \cite[Theorem 2.10]{BrandenHuh} and, thus, by \cite[Example 2.26]{BrandenHuh}, the sequence $I'_k(\sfP):=\frac{1}{(r-k)!}\cdot I_k(\sfP)$ is ultra log-concave. In other words, we have
\begin{equation*}
\frac{I'_k(\sfP)^2}{{r\choose k}^2} \ \ \geq \ \ \frac{I'_{k+1}(\sfP)}{{r\choose k+1}}\cdot \frac{I'_{k-1}(\sfP)}{{r\choose k-1}}
\end{equation*}
for all $1\leq k<r$. But this is equivalent to \begin{equation*}
I_k(\sfP)^2 \ \ \geq \ \ \frac{k+1}{k} \cdot I_{k+1}(\sfP)\cdot I_{k-1}(\sfP)
\end{equation*} 
for all $1\leq k< r$. 
\end{proof}

\begin{proof}[Proof of Theorem \ref{mainthm_logconcavitybimatroids}]
Let $\sfA$ be a valuated bimatroid on rows $E$ and columns $F$, and set $m=\vert E\vert$. Consider the associated valuated matroid $\widehat{\sfA}$ as in Proposition \ref{prop_extendedmatroid}. Fix a constant $0<q\leq 1$. By \cite[Theorem 3.14]{BrandenHuh} the polynomial
\begin{equation*}
    f_{\widehat{\sfA}}(w)\ \ =\ \ \sum_{S\in {E\sqcup F\choose m}} q^{\nu_{\widehat{\sfA}}(S)}\prod_{s\in S} w_s
\end{equation*}
is Lorentzian. Setting all $w_s=x$ when $s\in E$ and $w_s=y$ when $s\in F$, the resulting polynomial
\begin{equation*}
    g_{\sfA}(x,y)\ \ =\ \ \sum_{k=0}^m R_k(\sfA) x^{m-k} y^{k}
\end{equation*}
remains Lorentzian by \cite[Theorem 2.10]{BrandenHuh}. By \cite[Example 2.26]{BrandenHuh}, this means that the sequence $R_k$ is ultra log-concave, i.e.\ we have
    \begin{equation*}
        \frac{R_k(\sfA)^2}{{m\choose k}^2}\ \ \geq \ \ \frac{R_{k+1}(\sfA)}{{m\choose k+1}}\cdot \frac{R_{k-1}(\sfA)}{{m\choose k-1}}\ .
    \end{equation*}
In order to obtain the general statement, we apply the same argument to $\sfA^T$ and note that the ultra log-concavity for $N=\min\big\{\vert E\vert,\vert F\vert\big\}$ implies the ultra log-concavity for all $N\geq \min\big\{\vert E\vert,\vert F\vert\big\}$.
\end{proof}


\section{Towards ultra log-concavity: an open question}\label{section_ultralogconcavity}

Let $\sfM=(E,\nu_{\sfM})$ be a valuated matroid of rank $r$ on a finite ground set $E$, and denote by $\widetilde{\nu}_\sfM$ the extension of $\nu_{\sfM}$ to all of ${E\choose \leq r}$ as above. Recall that, given a constant $0<q\leq 1$, we denote
\begin{equation*}
I_k(\sfM):=\sum_{S\in{E\choose k}} q^{\widetilde{\nu}_\sfM(S)} 
\end{equation*}
for $0\leq k\leq r$. Write $N=\vert E\vert$.

\begin{question}
    Is the sequence $I_k(\sfM)$ not only log-concave but also ultra log-concave? I.e., do we have
\begin{equation*}
\Bigg(\frac{I_k(\sfM)}{{N \choose k}}\Bigg)^2 \ \ \geq \ \ \frac{I_{k+1}(\sfM)}{{N \choose k+1}}\cdot \frac{I_{k-1}(\sfM)}{{N\choose k-1}}
\end{equation*}
for all $1\leq k<r$?
\end{question}

In the non-valuated case, this is the strongest version of Mason's conjecture, which is proved in \cite{BrandenHuh, AnariLiuGharanVinzantIII, ChanPak}. Our method of proof in this article via the construction of a generic extension is a generalization of the approach in \cite{RoehrleUlirsch} (similar in spirit to \cite{Lenz_logconcavefvector}) and does not provide us with the desired ultra log-concavity. Unfortunately, a direct generalization of the approach in \cite[Section 4.3]{BrandenHuh} to the setting of valuated matroids via a suitable multivariate Tutte polynomial does not seem to work either.

To provide evidence for this stronger inequality in the valuated case, we have written code which automates the verification of ultra log-concavity for a given valuated matroid and a randomly generated variable $0<q<1$. This code has an in-built numerical accuracy bound which only returns counterexamples with a fixed (but variable) numerical certainty. 

Further, we have implemented methods in \texttt{Oscar} \cite{Oscar} which randomly generate valuated matroids via their basis-valuation maps, a.k.a. Pl\"ucker vectors. We obtain a randomly generated \emph{representable} Pl\"ucker vector by generating a sparse random $d\times n$ matrix over the field of Puiseux series and computing the valuations of its maximal minors. To obtain a non-representable random Pl\"ucker vector, we proceed by taking a random selection of fixed pre-computed non-representable classical matroids, enriching those with the trivial valuation and taking the direct sum of this valuated matroid with a randomly generated representable matroid.

Using our code, we have checked around 2 million random valuated matroids for ultra log-concavity. We sampled among representable valuated matroids of ranks less than or equal to $9$ over ground sets of size less than or equal to $15$, over $10\,000$ for each combination of rank and ground-set size. On non-representable matroids, we have checked over $10\,000$ direct sums of the V\'amos, Fano, and Non-Pappus matroids with random representable matroids of rank $\leq 5$ and ground-set size $\leq 9$ respectively. Again, we evenly distributed among different combinations of rank and ground set size of the representable matroid. Our code does not return a counterexample to ultra log-concavity. This leads us to believe that if a counterexample to the ultra log-concavity exists, it must be either of high rank, high ground set size, or non-realizable in a way we were unable to check.

All code used to check log-concavity and randomly generate Pl\"ucker vectors can be found at \url{https://github.com/VictoriaSchleis/log-concavity/}.



 \bibliographystyle{amsalpha}
\bibliography{biblio}{}

\end{document}